 \newtheorem{them}{Theorem}[section]
 \newtheorem{thm}[them]{THEOREM}
 \newtheorem{cor}[them]{COROLLARY}
 \newtheorem{lem}[them]{LEMMA}
 \theoremstyle{definition}
 \newtheorem{defn}[them]{\textit{Definition}}
 \theoremstyle{remark}
 \numberwithin{equation}{section}
  \theoremstyle{Example}
 \newtheorem{ex}[them]{\textit{Example}}
 \newcommand{\A}{\mathcal{A}}
   \newcommand{\B}{\mathcal{B}}
\begin{document}

\textwidth=13cm
\setcounter{page}{1}

\renewcommand{\currentvolume}{00}

\renewcommand{\currentyear}{0000}

\renewcommand{\currentissue}{0}

\title[MODULE BIFLATNESS OF THE SECOND DUAL OF BANACH ALGEBRAS]
 {MODULE BIFLATNESS OF THE SECOND DUAL OF BANACH ALGEBRAS}

 \author[A. Bodaghi]{ABASALT BODAGHI}

\author[A. JABBARI]{ ALI JABBARI}

\thanks{}



\date{}

\dedicatory{}



\maketitle
\begin*{}
Let $\mathcal A$ be a Banach algebra. Using the concept of module biflatness, we show that the module amenability of the second dual $\mathcal A^{**}$ (with the first Arens product) necessitates the module amenability of $\mathcal A$. We give some examples of Banach algebras $\mathcal A$ such that $\mathcal A^{**}$ are module biflat, but which are not themselves module biflat.\\
\textit{AMS 2010 Subject Classification:} Primary 46H25; Secondary 46B28\\
\textit{Key words:}
Banach modules, Inverse semigroup, Module amenable,  Module biflat,
Module biprojective
\end*{}
\section{INTRODUCTION}
Let $\mathcal A$ be a Banach algebra and $\omega: \mathcal A \widehat
\otimes \mathcal A\longrightarrow \mathcal A$; $a\otimes b\mapsto ab$ be the canonical morphism (for emphasis, $\omega_{\mathcal A}$). Clearly, $\omega$
is an $\mathcal A$-bimodule map (i.e. a bounded linear map which preserves the module operations) with respect to the canonical bimodule structure on the
projective tensor product $\mathcal A \widehat \otimes \mathcal A$. A Banach algebra $\mathcal A$ is called {\it biprojective} if $\omega$ has a bounded right inverse which is an $\mathcal A$-bimodule map. A Banach algebra $\mathcal A$ is said to be {\it biflat} if the adjoint $\omega^*:\mathcal A^*\longrightarrow  (\mathcal A \widehat\otimes \mathcal A)^*$ has a bounded left inverse which is an $\mathcal A$-bimodule map. It is obvious that every biprojective Banach algebra is biflat. The basic properties of biprojectivity and biflatness are investigated in \cite{hel3}.

To work with amenable semigroups is in general harder than to work amenable groups \cite{day}. One major difference is that in the group case the amenability of group is related to the amenability of the Banach algebra of absolutely integrable functions on the group. The concept of amenability for Banach algebras was initiated by Johnson in \cite{jo1}.  One of the fundamental results was that $L^1(G)$ is an amenable Banach algebra if and only if $G$ is an amenable
locally compact group. Now Johnson's amenability theorem fails for semigroups. Indeed there are many commutative (and so amenable) discrete semigroups for which the corresponding semigroup algebra is not amenable (a characterization of the semigroups $S$ such that $\ell ^{1}(S)$ is amenable
is given in Theorem 10.12 of \cite{dls}). The concept of module amenability is introduced to resolve this problem. It is shown in \cite{am1} that a discrete inverse semigroup is amenable if and only if its semigroup algebra is module amenable. The notion of module amenability is closely related to module biprojectivity and biflatness.

In \cite{bod}, Bodaghi and Amini introduced a module biprojective and module biflat Banach algebra which is a Banach module over another Banach algebra
with compatible actions. For every inverse semigroup $S$ with subsemigroup $E$ of idempotents, they showed that $\ell ^{1}(S)$ is
module biprojective, as an $\ell ^{1}(E)$-module, if and only if
an appropriate group homomorphic image $G_S$ of $S$ is
finite. They also proved that module biflatness of $\ell ^{1}(S)$ is equivalent to the amenability of the underlying semigroup $S$. Some examples  of Banach algebras which are module biprojective (biflat), but not biprojective (biflat) are given in \cite{bod}.

It is shown in \cite[Proposition 3.6]{am2} that if $\mathcal A$
is a commutative Banach ${\mathfrak A}$-bimodule such that $\mathcal
A^{**}$ is ${\mathfrak A}$-bimodule amenable, then $\mathcal A$ is
module amenable. In part three of this paper we improve this
result by assuming a weaker condition on $\mathcal A$, using our
results on module biflatness. Among many other things, we bring some examples of Banach algebras such that their second dual are module biflat, but which are not themselves module biflat.



\section{MODULE BIFLATNESS}

Let ${\mathcal A}$ and ${\mathfrak A}$ be
Banach algebras. Suppose that ${\mathcal A}$ is a Banach ${\mathfrak
A}$-bimodule with compatible actions, that is
$$\alpha\cdot(ab)=(\alpha\cdot a)b,
\,\,(ab)\cdot\alpha=a(b\cdot\alpha) \hspace{0.3cm}(a,b \in
{\mathcal A},\alpha\in {\mathfrak A}).
$$
Let $X$ be a Banach ${\mathcal A}$-bimodule and a
 Banach ${\mathfrak A}$-bimodule with compatible actions, that is
$$\alpha\cdot(a\cdot x)=(\alpha\cdot a)\cdot x,
\hspace{0.5cm}a\cdot(\alpha\cdot x)=(a\cdot\alpha)\cdot x, $$
$$(\alpha\cdot x)\cdot a=\alpha\cdot(x\cdot a) \hspace{0.5cm}(a \in{\mathcal
A},\alpha\in {\mathfrak A},x\in{X} )$$
and similarly for the right or two-sided actions. Then we say that
$X$ is a Banach ${\mathcal A}$-${\mathfrak A}$-module. A Banach ${\mathcal A}$-${\mathfrak A}$-module $X$ is called {\it commutative} if
$\alpha\cdot x=x\cdot\alpha$ for all $\alpha\in {\mathfrak
A}$ and $x\in X $. If $X
$ is a (commutative) Banach ${\mathcal A}$-${\mathfrak
A}$-module, then so is $X^*$.

Let $X, Y$ be Banach $\mathcal A$-$\mathfrak A$-modules. We say that a map $\phi:
X\longrightarrow Y$ is a left $\mathcal A$-$\mathfrak A$-module homomorphism if it is an
$\mathfrak A$-bimodule homomorphism and a left $\mathcal A$-module homomorphism,
that is
$$\phi(\alpha\cdot x)=\alpha\cdot\phi(x),\quad
\phi(x\cdot\alpha)=\phi(x)\cdot\alpha,\quad \phi(a\cdot x)=a\cdot
\phi(x),$$
for all $\alpha\in \mathfrak A, a\in \mathcal A$ and  $x\in X$. Right $\mathcal A$-$\mathfrak A$-module homomorphisms and (two-sided) $\mathcal A$-$\mathfrak A$-module homomorphisms are defined in a similar fashion.

Let $\mathcal A$ be a commutative $\mathfrak A$-bimodule and
act on itself by multiplication from both sides, that is
$$a\cdot b=ab,\quad b\cdot a=ba \qquad(a,b\in \mathcal A).$$
Then it is also
a Banach ${\mathcal A}$-${\mathfrak A}$-module. If ${\mathcal A}$ is a Banach $\mathfrak A$-bimodule with
compatible actions, then so are the dual space $\mathcal A^*$ and
the second dual space $\mathcal A^{**}$. If moreover $\mathcal A$
is a commutative $\mathfrak A$-bimodule, then $\mathcal A^*$ and the
$\mathcal A^{**}$ are commutative ${\mathcal A}$-${\mathfrak
A}$-modules. The canonical images of $a\in\mathcal{A}$ and $\mathcal{A}$ in $\mathcal{A}^{**}$ will be denoted by $\hat{a}$ and $\hat{\mathcal{A}}$, respectively.

Let ${\mathcal A}$ be a Banach ${\mathfrak
A}$-bimodule with compatible actions, and let ${\mathcal A}\widehat{\otimes} _{\mathfrak A} {\mathcal A}$ be the module projective tensor product of $\mathcal{A}$ and $\mathcal{A}$. Then ${\mathcal A}\widehat{\otimes} _{\mathfrak A} {\mathcal A}$ is isomorphic
to the quotient space $(\mathcal A \widehat{\otimes} \mathcal
A)/{I_{\mathcal A}}$, where $I_{\mathcal A}$ is the closed linear span of $\{ a\cdot\alpha \otimes b-a
\otimes\alpha \cdot b : \alpha\in {\mathfrak A},a,b\in{\mathcal A}\}$ \cite{ra}. Also consider the closed ideal
$J_{\mathcal A}$ of ${\mathcal A}$ generated by elements of the form $
(a\cdot\alpha) b-a(\alpha \cdot b)$ for $ \alpha\in {\mathfrak
A},a,b\in{\mathcal A}$. We shall denote $I_{\mathcal A}$ and
$J_{\mathcal A}$ by $I$ and $J$, respectively, if there is no risk of confusion.
Then $I$ is an ${\mathcal A}$-submodule and an ${\mathfrak A}$-submodule of
$\mathcal A \widehat{\otimes} \mathcal A$, $J$ is an ${\mathcal A}$-submodule and an ${\mathfrak A}$-submodule of $\mathcal A$, and both of the quotients $\mathcal A \widehat{\otimes}_{\mathfrak A} \mathcal A$ and $\mathcal A/J$ are
${\mathcal A}$-bimodules and ${\mathfrak A}$-bimodules. Also,
$\mathcal A/J$ is a Banach $\mathcal A$-${\mathfrak A}$-module
 when ${\mathcal A}$ acts on ${\mathcal
A}/J$ canonically. Let $\tilde{\omega} : \mathcal{A}\widehat{\otimes}_{\mathfrak{A}}\mathcal{A} =
{(\mathcal{A}\widehat{\otimes}\mathcal{A}})/{{I}}\longrightarrow
{\mathcal{A}}/{{J}}$ be induced product map by $\omega$, i.e., $\tilde{\omega}(a\otimes
b+{I})=ab+{J}$.

Let $I$ and $J$ be the closed ideals as in the above. Then $(\mathcal A \widehat \otimes \mathcal A)/{I}$
is not a $\mathcal A/J$-bimodule, in general. It is shown in \cite{bod} that if $\mathcal A/J$ has a right bounded approximate identity,
then $(\mathcal A \widehat \otimes \mathcal A)/{I}$ is an $\mathcal A/J$-bimodule when ${\mathfrak A}$ acts on $\mathcal A$ trivially
from the left or the right (see also \cite[Lemma 3.13]{bab}). Recall that a Banach algebra ${\mathfrak A}$ acts trivially on
$\mathcal A$ from the left if there is a continuous linear
functional $f$ on ${\mathfrak A}$ such that  $\alpha\cdot
a=f(\alpha)a$ for all $\alpha\in
\mathfrak A$ and $a\in \mathcal A$. The trivial right action is defined, similarly (see \cite {amb}). From now on, when we
consider $(\mathcal A \widehat \otimes \mathcal A)/{I}$ as an
$\mathcal A/J$-bimodule, we have assumed that the above conditions are satisfied (for more details see \cite{bod}).

\begin{defn}\label{def1} \cite{bod} Let $\mathfrak A$ and $\mathcal A$ be Banach algebras. Then $\mathcal A$ is called {\it module
biprojective} \text{(}as an $\mathfrak A$-bimodule\text{)} if $\widetilde{\omega}$  has a bounded right inverse which is an
 ${\mathcal A}/J$-${\mathfrak A}$-module homomorphism.
\end{defn}
\begin{defn}\label{def2} \cite{bod} Let $\mathfrak A$ and $\mathcal A$ be Banach algebras. Then  $\mathcal A$ is called {\it module biflat}
\text{(}as an $\mathfrak A$-bimodule\text{)} if $ \widetilde{\omega}^* $ has a
bounded left inverse which is an ${\mathcal A}/J$-${\mathfrak
A}$-module homomorphism.
\end{defn}

Let $\mathcal A$ and $\B$ be Banach algebras. The weak$^*$ operator topology ($W^*OT$) on $\mathcal{L}(\A,\B^*)$ is the locally convex topology determined by the seminorms $\{p_{a,b}:a\in\A, b\in\B\}$ where $p_{a,b}(f)=|\langle f(a),b\rangle|$ in which $f\in \mathcal{L}(\A,\B^*)$.

\begin{thm}\label{2.1}
Let $\A$ be a Banach $\mathfrak{A}$-bimodule. Then the following statements are equivalent:
\begin{itemize}
  \item[(i)] $\A$ is module biflat;
  \item[(ii)] There is an $\A/J$-$\mathfrak{A}$-module homomorphism $\rho:\A/J\longrightarrow((\A\widehat{\otimes}\A)/I)^{**}$ such that $\widetilde{\omega}^{**}\circ\rho$ is the canonical embedding from $\A/J$ into $\A^{**}/J^{\bot\bot}$.
       \item[(iii)] There is a net $(\widetilde{\omega}_\gamma)$ of uniformly bounded $\A/J$-$\mathfrak{A}$-module homomorphisms from $(\A/J)^*$ into $((\A\widehat{\otimes}\A)/I)^{*}$ such that $\lim_\gamma W^*OT-\widetilde{\omega}_\gamma\circ\widetilde{\omega}^{*}=id_{(\A/J)^*}$.
\end{itemize}
\end{thm}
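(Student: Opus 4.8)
The plan is to prove the chain of implications (i) $\Rightarrow$ (ii) $\Rightarrow$ (iii) $\Rightarrow$ (i), using throughout the following standard facts, valid under the standing hypotheses of this section (recalled in the discussion preceding Definition \ref{def1}): the spaces $(\A\widehat{\otimes}\A)/I$, $\A/J$ and all their iterated duals are Banach $\A/J$-$\mathfrak{A}$-modules; the adjoint of a bounded $\A/J$-$\mathfrak{A}$-module homomorphism between such modules is again one for the dual module structures; the canonical embedding $\kappa_{X}\colon X\to X^{**}$ is always such a homomorphism and satisfies the naturality identity $\kappa_{Y}\circ T=T^{**}\circ\kappa_{X}$ for $T\in\mathcal{L}(X,Y)$ and the triangle identity $\kappa_{X}^{*}\circ\kappa_{X^{*}}=\mathrm{id}_{X^{*}}$; and $(\A/J)^{**}$ is canonically identified with $\A^{**}/J^{\bot\bot}$, under which $\widetilde{\omega}^{**}$ is the map appearing in (ii).

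For (i) $\Rightarrow$ (ii), let $\theta\colon((\A\widehat{\otimes}\A)/I)^{*}\to(\A/J)^{*}$ be a bounded $\A/J$-$\mathfrak{A}$-module homomorphism with $\theta\circ\widetilde{\omega}^{*}=\mathrm{id}_{(\A/J)^{*}}$, and put $\rho:=\theta^{*}\circ\kappa_{\A/J}\colon\A/J\to((\A\widehat{\otimes}\A)/I)^{**}$. As a composition of $\A/J$-$\mathfrak{A}$-module homomorphisms, $\rho$ is one, and $\widetilde{\omega}^{**}\circ\rho=(\theta\circ\widetilde{\omega}^{*})^{*}\circ\kappa_{\A/J}=\kappa_{\A/J}$, which is the canonical embedding of $\A/J$ into $\A^{**}/J^{\bot\bot}$. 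For (ii) $\Rightarrow$ (iii) it is enough to note that (ii) in fact yields an \emph{exact} bounded module-homomorphism left inverse of $\widetilde{\omega}^{*}$: set $\Theta:=\rho^{*}\circ\kappa_{((\A\widehat{\otimes}\A)/I)^{*}}\colon((\A\widehat{\otimes}\A)/I)^{*}\to(\A/J)^{*}$, which is a bounded $\A/J$-$\mathfrak{A}$-module homomorphism, and observe that, by naturality of $\kappa$, the hypothesis $\widetilde{\omega}^{**}\circ\rho=\kappa_{\A/J}$, and the triangle identity, $\Theta\circ\widetilde{\omega}^{*}=(\widetilde{\omega}^{**}\circ\rho)^{*}\circ\kappa_{(\A/J)^{*}}=\kappa_{\A/J}^{*}\circ\kappa_{(\A/J)^{*}}=\mathrm{id}_{(\A/J)^{*}}$; the constant net $\widetilde{\omega}_{\gamma}\equiv\Theta$ then witnesses (iii).

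For (iii) $\Rightarrow$ (i), let $M$ bound the net $(\widetilde{\omega}_{\gamma})$ of module homomorphisms $((\A\widehat{\otimes}\A)/I)^{*}\to(\A/J)^{*}$. The norm-$M$ ball $\mathcal{B}_{M}$ of $\mathcal{L}\big(((\A\widehat{\otimes}\A)/I)^{*},(\A/J)^{*}\big)$ is $W^{*}OT$-compact: via $f\mapsto(f(\phi))_{\phi}$ it embeds into the product $\prod_{\phi}\{\,y\in(\A/J)^{*}:\|y\|\le M\|\phi\|\,\}$ of weak$^{*}$-compact balls, which is compact by Tychonoff, and $\mathcal{B}_{M}$ is closed there because linearity and the estimate $\|f(\phi)\|=\sup_{\|b\|\le 1}|\langle f(\phi),b\rangle|\le M\|\phi\|$ are preserved under $W^{*}OT$-limits. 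So pass to a $W^{*}OT$-convergent subnet $\widetilde{\omega}_{\gamma}\to\widetilde{\omega}_{0}\in\mathcal{B}_{M}$. Since each $\A/J$- and $\mathfrak{A}$-action on $(\A/J)^{*}$ is the adjoint of a bounded operator on the predual $\A/J$, hence weak$^{*}$-continuous, pairing the module identities $\widetilde{\omega}_{\gamma}(a\cdot\phi)=a\cdot\widetilde{\omega}_{\gamma}(\phi)$ and their analogues with elements of $\A/J$ and passing to the limit shows that $\widetilde{\omega}_{0}$ is an $\A/J$-$\mathfrak{A}$-module homomorphism. Finally, for $\psi\in(\A/J)^{*}$ and $b\in\A/J$ we get $\langle\widetilde{\omega}_{0}(\widetilde{\omega}^{*}\psi),b\rangle=\lim_{\gamma}\langle\widetilde{\omega}_{\gamma}(\widetilde{\omega}^{*}\psi),b\rangle=\langle\psi,b\rangle$ from the $W^{*}OT$-convergence in (iii), so $\widetilde{\omega}_{0}\circ\widetilde{\omega}^{*}=\mathrm{id}_{(\A/J)^{*}}$ and $\A$ is module biflat.

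I expect the only genuinely delicate point to be the compactness/limit step in (iii) $\Rightarrow$ (i): one must treat $(\A/J)^{*}$ with its weak$^{*}$ topology so that Banach--Alaoglu applies factorwise, check that $\mathcal{B}_{M}$ is really $W^{*}OT$-closed inside the product, and---crucially---observe that the module-homomorphism relations need only be verified after pairing with predual elements of $\A/J$, which is precisely where weak$^{*}$-continuity of the dual actions (a consequence of the commutativity and trivial-action standing hypotheses) enters. The remaining implications are formal manipulations with adjoints and canonical embeddings, the only real care being the bookkeeping of the module structures on the iterated dual spaces.
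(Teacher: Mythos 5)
Your proof is correct and follows essentially the same route as the paper: $\rho=\theta^{*}\circ\kappa_{\A/J}$ for (i)$\Rightarrow$(ii), the restricted adjoint $\rho^{*}$ as an exact left inverse for the converse (your (ii)$\Rightarrow$(iii) is just the paper's (ii)$\Rightarrow$(i) plus a constant net), and a $W^{*}OT$-compactness/limit-point argument for (iii)$\Rightarrow$(i). The only difference is that you spell out the Tychonoff/Banach--Alaoglu compactness and the module-homomorphism check in the limit, which the paper merely asserts.
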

\begin{proof}
(i)$\Rightarrow$(ii). Let $\A$ be module biflat. Then there exists a bounded left inverse $\pi:((\A\widehat{\otimes}\A)/I)^{*}\longrightarrow(\A/J)^*$ which is also an $\A/J$-$\mathfrak{A}$-module homomorphism such that $\pi\circ\widetilde{\omega}^{*}$ is the identity mapping. Put $\rho:=\pi^*|_{\A/J}$. Thus the mapping $\rho:\A/J\longrightarrow((\A\widehat{\otimes}\A)/I)^{**}$ is an $\A/J$-$\mathfrak{A}$-module homomorphism. Therefore $\widetilde{\omega}^{**}\circ\rho$ is the canonical embedding from $\A/J$ into $\A^{**}/J^{\bot\bot}$.

(ii)$\Rightarrow$(i). We firstly note that $\rho^*:((\A\widehat{\otimes}\A)/I)^{***}\longrightarrow(\A/J)^*$ is an $\A/J$-$\mathfrak{A}$-module homomorphism. Suppose that $\widetilde{\rho}$ is the restriction of $\rho^*$ on $((\A\widehat{\otimes}\A)/I)^{*}$ which is also an $\A/J$-$\mathfrak{A}$-module homomorphism. For every $f\in(\A/J)^*$ and $a\in\A$ we have
\begin{eqnarray*}
\langle \widetilde{\rho}\circ\widetilde{\omega}^*(f),a+J\rangle=\langle \rho(a+J), \widetilde{\omega}^*(f)\rangle=\langle \widetilde{\omega}^{**}\circ\rho(a+J), f\rangle=\langle f,a+J\rangle.
  \end{eqnarray*}
This means that $\widetilde{\omega}^*$ has a bounded left inverse which is $\A/J$-$\mathfrak{A}$-module homomorphism.

(i)$\Rightarrow$(iii) This is trivial.

(iii)$\Rightarrow$(i) Since every bounded subsets of $\mathcal{L}(((\A\widehat{\otimes}\A)/I)^{*},(\A/J)^*)$ are $W^*OT$ compact, the net $(\widetilde{\omega}_\gamma)$  has a $W^*OT$ point limit, say $\pi$. It is easy to check that $\pi$ is an $\A/J$-$\mathfrak{A}$-module homomorphism and $\pi\circ\widetilde{\omega}^{*}$ is the identity mapping.
\end{proof}

Let $X$, $Y$ and $Z$ be Banach ${\mathcal A}/J$-${\mathfrak
A}$-modules. Then the short exact sequence
\begin{equation*}\{0\}\longrightarrow X
\stackrel{\varphi}{\longrightarrow}Y
\stackrel{\psi}{\longrightarrow} Z\longrightarrow
\{0\}\end{equation*} is {\it admissible} if $\psi$ has a bounded
right inverse which is ${\mathfrak A}$-module homomorphism, and
{\it splits} if $\psi$ has a bounded right inverse which is a
${\mathcal A}/J$-${\mathfrak A}$-module homomorphism.

\begin{thm}\label{2.2}
Let $\A$ be a Banach $\mathfrak{A}$-bimodule with trivial left action, and let $\B$ two-sided closed ideal of $\A$. If  $\B/J_\B$ has a bounded approximate identity which is also a bounded approximate identity for $\A/J$, then $\A$ is module biflat.
\end{thm}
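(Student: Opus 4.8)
The plan is to verify condition (ii) of Theorem~\ref{2.1}: to produce an $\A/J$-$\mathfrak A$-module homomorphism $\rho\colon\A/J\to M^{**}$, where $M:=(\A\widehat{\otimes}\A)/I$, such that $\widetilde{\omega}^{**}\circ\rho$ is the canonical embedding of $\A/J$ into $\A^{**}/J^{\bot\bot}$.

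First I would record the consequences of the hypotheses. Let $(e_\gamma)$ be the bounded approximate identity of $\B/J_\B$, bounded by some $K$, and let $\bar e_\gamma\in\A/J$ be its image under the natural contractive homomorphism $q\colon\B/J_\B\to\A/J$ (well defined since $J_\B\subseteq J$); by hypothesis $(\bar e_\gamma)$ is a bounded approximate identity for $\A/J$. Hence $(\A/J)^2=\A/J$ by Cohen's factorization theorem, so $\widetilde{\omega}\colon M\to\A/J$ is onto and, by the open mapping theorem, preimages can be chosen with norm control. Because $\mathfrak A$ acts trivially on the left and $\A/J$ has a (in particular right) bounded approximate identity, $M$ is an $\A/J$-bimodule by the discussion preceding Definition~\ref{def1}, and on $M$ the left $\mathfrak A$-action is the scalar action $\alpha\cdot m=f(\alpha)m$, since this already holds on the first tensor factor. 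Finally, since $\B$ is a two-sided closed ideal, the inclusion $\B\hookrightarrow\A$ carries $I_\B$ into $I$, so it induces a contraction $\jmath\colon(\B\widehat{\otimes}\B)/I_\B\to M$ with $\widetilde{\omega}\circ\jmath=q\circ\widetilde{\omega}_{\B}$, where $\widetilde{\omega}_{\B}$ is the analogue of $\widetilde{\omega}$ for $\B$; as $\B/J_\B$ has a bounded approximate identity, $\widetilde{\omega}_{\B}$ is onto as well, so for each $\gamma$ I may pick $s_\gamma\in(\B\widehat{\otimes}\B)/I_\B$ with $\widetilde{\omega}_{\B}(s_\gamma)=e_\gamma$ and $\|s_\gamma\|\le CK$, and set $t_\gamma:=\jmath(s_\gamma)\in M$, so that $(t_\gamma)$ is uniformly bounded and $\widetilde{\omega}(t_\gamma)=\bar e_\gamma$.

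Next, passing to a subnet I may assume $t_\gamma\to\mathbf t$ in the weak$^*$ topology of $M^{**}$, and I define $\rho(a+J):=(a+J)\cdot\mathbf t$ using the weak$^*$-continuous left $\A/J$-action on $M^{**}$. Associativity of the left action makes $\rho$ a left $\A/J$-module homomorphism, and the triviality of the left $\mathfrak A$-action (on both $\A/J$ and $M^{**}$) makes it a left $\mathfrak A$-module homomorphism. For the range condition, note that $\widetilde{\omega}^{**}$ is weak$^*$-continuous and a left $\A/J$-module homomorphism (being the second adjoint of the module map $\widetilde{\omega}$), so $\widetilde{\omega}^{**}(\mathbf t)=\mathrm{w}^*\text{-}\lim_\gamma\bar e_\gamma$ and $\widetilde{\omega}^{**}\circ\rho(a+J)=(a+J)\cdot\big(\mathrm{w}^*\text{-}\lim_\gamma\bar e_\gamma\big)=\mathrm{w}^*\text{-}\lim_\gamma\big((a+J)\bar e_\gamma\big)$, which is the canonical image of $a+J$ because $(a+J)\bar e_\gamma\to a+J$ in norm.

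The hard part — and the place where the full strength of the hypothesis must be used — is to show that $\rho$ is also a \emph{right} $\A/J$-module homomorphism and a \emph{right} $\mathfrak A$-module homomorphism; equivalently, that $(a+J)\cdot\big((b+J)\cdot\mathbf t-\mathbf t\cdot(b+J)\big)=0$ and $(a+J)\cdot\big(\mathbf t\cdot\alpha-f(\alpha)\mathbf t\big)=0$ for all $a,b\in\A$ and $\alpha\in\mathfrak A$. The idea is that the relevant commutators are images under $\jmath^{**}$ of commutators governed by the $\B/J_\B$-bimodule structure of $(\B\widehat{\otimes}\B)/I_\B$, since $\bar e_\gamma\in q(\B/J_\B)$; one transports the computation into $\B\widehat{\otimes}_{\mathfrak A}\B$ and then uses that $(e_\gamma)$ is a bounded approximate identity for $\B/J_\B$ (through $\widetilde{\omega}_{\B}(s_\gamma)=e_\gamma$) \emph{and} simultaneously for $\A/J$ to absorb these commutators in an iterated weak$^*$ limit, the ideal property of $\B$ keeping all intermediate elements in the right place and the trivial left action trivializing the left-handed equivariance. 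This estimate is the computational heart of the argument; once it is in place, $\rho$ is the required $\A/J$-$\mathfrak A$-module homomorphism, and Theorem~\ref{2.1}(ii) yields that $\A$ is module biflat. (One could equally work with the approximate left inverses $\widetilde{\omega}_\gamma$ of Theorem~\ref{2.1}(iii) built from the maps $a+J\mapsto t_\gamma\cdot(a+J)$, in which case the same commutator estimate is what is needed, now to make each $\widetilde{\omega}_\gamma$ a two-sided module homomorphism.)
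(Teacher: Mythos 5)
Your overall strategy --- verifying condition (ii) of Theorem \ref{2.1} by producing a splitting supported on $\B$ and then extending it to $\A/J$ via the common bounded approximate identity --- is the same as the paper's. But the step you defer as ``the computational heart'' is in fact the entire content of the theorem, and it cannot be completed from your construction. You take \emph{arbitrary} bounded preimages $s_\gamma$ of $e_\gamma$ under $\widetilde{\omega}_\B$ (via surjectivity plus the open mapping theorem), push them into $((\A\widehat{\otimes}\A)/I)^{**}$, take a weak$^*$ cluster point $\mathbf t$, and then need $(a+J)\cdot\mathbf t\cdot(b+J)=(ab+J)\cdot\mathbf t$. A weak$^*$ cluster point of arbitrary preimages of a bounded approximate identity has no reason to satisfy this: in the classical specialization $\mathfrak A=\Complex$ and $\B=\A$ (so that $I=J=0$), your argument would show that every Banach algebra with a bounded approximate identity is biflat, hence amenable, which is false. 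So the missing commutator estimate is not a routine transport-and-absorb computation; it is precisely the point where the hypothesis must enter through a structural splitting result rather than through preimage-picking.

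The paper avoids this by never choosing preimages at all. It forms the short exact sequence $0\to\ker\widetilde{\omega}_\B\to(\B\widehat{\otimes}\B)/I_\B\to\B/J_\B\to0$ of $\B/J_\B$-$\mathfrak A$-module maps, dualizes twice, and invokes Lemma 2.2 of \cite{bod} to conclude that the dualized sequences are admissible, from which it extracts a bounded right inverse $\rho$ of $\widetilde{\omega}_\B^{**}$ that is already a $\B/J_\B$-$\mathfrak A$-module homomorphism. Only then does it set $\widetilde{\rho}=(i\otimes_{\mathfrak A}i)^{**}\circ\rho$ and define $\overline{\rho}(a+J)=\mathrm{weak}^*\text{-}\lim_j\widetilde{\rho}(e_j+J_\B)\cdot(a+J)$; the two-sided module equivariance of $\overline{\rho}$ is then inherited from that of $\rho$ together with Cohen factorization, not established by an estimate on cluster points. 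To repair your argument you would need to replace your $t_\gamma$ (equivalently $\mathbf t$) by the output of such a module splitting of $\widetilde{\omega}_\B^{**}$; as written, the right $\A/J$- and $\mathfrak A$-equivariance of your $\rho$ is a genuine gap.
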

\begin{proof}We firstly note that when $\B/J_\B$ has a right bounded approximate identity,
then $(\mathcal{B}\widehat{\otimes}\B)/I_\B$  is $\B/J_\B$-bimodule if ${\mathfrak A}$ acts on $\mathcal B$ trivially
from left \cite{bod}. Consider the short exact sequence of $\B/J_\B$-$\mathfrak{A}$-module homomorphisms as follows:
\begin{equation}\label{s.2.1}
         0\longrightarrow \ker(\widetilde{\omega}_\B)\stackrel{\imath}{\longrightarrow}(\mathcal{B}\widehat{\otimes}\B)/I_\B\stackrel{\widetilde{\omega}_\B}\longrightarrow \mathcal{B}/J_\B\longrightarrow0.
\end{equation}
Then the following first and second duals of (\ref{s.2.1}) are $\B/J_\B$-$\mathfrak{A}$-module homomorphisms
\begin{equation}\label{s.2.2}
         0\longrightarrow (\mathcal{B}/J_\B)^* \stackrel{\widetilde{\omega}_\B^*}{\longrightarrow}((\mathcal{B}\widehat{\otimes}\B)/I_\B)^*\stackrel{\imath^*}\longrightarrow \ker(\widetilde{\omega}_\B)^*\longrightarrow0
\end{equation}
\begin{equation}\label{s.2.3}
        0\longrightarrow \ker(\widetilde{\omega}_\B)^{**}\stackrel{\imath^{**}}{\longrightarrow}((\mathcal{B}\widehat{\otimes}\B)/I_\B)^{**}\stackrel{\widetilde{\omega}_\B^{**}}\longrightarrow (\mathcal{B}/J_\B)^{**}\longrightarrow0.
\end{equation}
By Lemma 2.2 of \cite{bod}, the short exact sequences  (\ref{s.2.2})  and (\ref{s.2.3}) are admissible. Therefore $\widetilde{\omega}_\B^{**}$ has a bounded right inverse, say $\rho$, which is a $\B/J_\B$-$\mathfrak{A}$-module homomorphism. Let $i:\B\longrightarrow\A$ be the canonical embedding. Thus $i$ induces the mapping $i\otimes_\mathfrak{A}i:(\B\widehat{\otimes}\B)/I_\B\longrightarrow\A\widehat{\otimes}\A/I$ as the usual definition. Put $\widetilde{\rho}:=(i\otimes_\mathfrak{A}i)^{**}\circ\rho$. Assume that $e_j+J_\B$ is the bounded approximate identity of $\B/J_\B$. Define $\overline{\rho}:\A/J\longrightarrow((\A\widehat{\otimes}\A)/I)^{**}$ by $\overline{\rho}(a+J)=\text{weak}^*-\lim_\alpha\widetilde{\rho}(e_j+J_\B)\cdot(a+J)$. By definition of $\overline{\rho}$ and Cohen's factorization theorem, $\overline{\rho}$ is an $\A/J$-$\mathfrak{A}$-module homomorphism. Thus we have
\begin{align}
 \nonumber
  \widetilde{\omega}^{**}(\overline{\rho}(a+J)) &= \widetilde{\omega}^{**}(\text{weak}^*-\lim_\alpha\widetilde{\rho}(e_j +J_\B)\cdot(a+J)) \\
    \nonumber
    &=\text{weak}^*-\lim_\alpha\widetilde{\omega}^{**}(\widetilde{\rho}(e_j+J_\B)\cdot(a+J))\\
    \nonumber
  &= \text{weak}^*-\lim_\alpha\widetilde{\omega}^{**}_\B(\widetilde{\rho}(e_j+J_\B))\cdot(a+J)\\
  \nonumber
  &=\lim_\alpha (e_j+J_\B)\cdot(a+J)=a+J.
\end{align}
Applying Theorem \ref{2.1}, we observe that $\A$ is module biflat.
\end{proof}
For a discrete semigroup $S$, $\ell ^{\infty}(S)$ is the Banach
algebra of bounded complex-valued functions on $S$ with the
supremum norm and pointwise multiplication. For each $a \in S$
and $f \in \ell ^{\infty}(S)$, let $l_af$ and $r_af$ denote the
left and the right translations  of $f$ by $a$, that is
$(l_af)(s)=f(as)$ and $(r_af)(s)=f(sa)$, for each $s \in S$. Then
a linear functional $m\in (\ell ^{\infty}(S))^*$ is called a {\it
mean} if $\| m\|=\langle m,1\rangle=1$;  $m$ is called a {\it
left} ({\it right}) {\it invariant mean} if $m(l_af)=m(f)
\,(m(r_af)=m(f)$, respectively) for all $s \in S$ and  $f \in
\ell ^{\infty}(S)$. A discrete semigroup $S$ is called {\it
amenable} if there exists a mean $m$ on $\ell ^{\infty}(S)$ which
is both left and right invariant (see \cite{dun}). An {\it
inverse semigroup} is a discrete semigroup $S$ such that for each
$s\in S$, there is a unique element $s^*\in S$ with $ss^*s=s$ and
$s^*ss^*=s^*$. Elements of the form $ss^*$ are called {\it
idempotents} of $S$. For an inverse semigroup $S$, a left
invariant mean on $\ell ^{\infty}(S)$ is right invariant and vice
versa.

Let $S$ be an inverse
semigroup with the set of idempotents $E$ (or $E_S$), where the order of $E$
is defined by
$$e\leq d \Longleftrightarrow ed=e \hspace{0.3cm}(e,d \in
E).$$
Since $E$ is a commutative subsemigroup of $S$
\cite[Theorem V.1.2]{ho}, actually a semilattice, $\ell
^{1}(E)$ could be regarded as a commutative subalgebra of $ \ell
^{1}(S)$, and thereby $ \ell ^{1}(S)$ is a Banach algebra and a
Banach $ \ell ^{1}(E)$-bimodule with compatible actions \cite{am1}.
Here, for technical reason, we assume that $ \ell ^{1}(E)$ acts on $ \ell ^{1}(S)$ by
multiplication from right and trivially from left, that is
\begin{eqnarray}\label{e1}\delta_e\cdot\delta_s = \delta_s, \,\,\delta_s\cdot\delta_e = \delta_{se} =
\delta_s * \delta_e \hspace{0.3cm}(s \in S,  e \in E).
\end{eqnarray}
In this case, the ideal $J$  is the closed linear
span of $\{\delta_{set}-\delta_{st} :\, s,t \in S,  e \in E\}.$ We consider an equivalence relation on $S$ as follows:
$$s\approx t \Longleftrightarrow \delta_s-\delta_t \in J \hspace{0.2cm} (s,t \in
S).$$
For an inverse semigroup $S$, the quotient ${S}/{\approx}$ is a
discrete group (see \cite{am2} and \cite{pou}). Indeed,
${S}/{\approx}$ is isomorphic to the maximal group  homomorphic
image $G_S$ \cite{mn} of $S$ \cite{pou2}. In particular, $S$ is
amenable if and only if $S/\approx$ is amenable \cite{dun, mn}. As
in \cite[Theorem 3.3]{ra1}, we see that $\ell^{1}(S)/J\cong {\ell ^{1}}(G_S)$. Note that ${\ell ^{1}}(G_S)$ has an identity.

Let $H$ be a subsemigroup of $S$, $E'$ be the set of idempotents in $H$ and $J'$  be the closed linear
span of $\{\delta_{set}-\delta_{st} \quad s,t \in H,  e \in E'\}.$ Clearly, $E'\subseteq E$ and $J'\subseteq J$. If $\approx$ is a equivalence relation on $S$ as defined above, then it is also a equivalence relation on $H$. By the above statements we have the following result.
\begin{cor}
Let $S$ be an inverse semigroup, and let $H$ be an ideal in $S$. Assume that the identity of $\ell^1(G_H)$  is an identity for $\ell^1(G_S)$. Then $S$ is amenable.
\end{cor}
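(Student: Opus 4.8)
The plan is to combine Theorem \ref{2.2} with the equivalence, recalled in the introduction, between module biflatness of $\ell^1(S)$ and amenability of $S$. Put $\A=\ell^1(S)$ and $\mathfrak A=\ell^1(E)$ with $E=E_S$, equipped with the module action (\ref{e1}), so that $\mathfrak A$ acts trivially from the left. First I would record the structural facts about $H$: since $H$ is a two-sided ideal of the inverse semigroup $S$ it is closed under products, and for $h\in H$ we have $h^*=h^*hh^*\in SHS\subseteq H$, so $H$ is closed under the involution as well; hence $H$ is an inverse subsemigroup, $G_H$ is meaningful, and its idempotent semigroup is $E'=E\cap H$. Consequently $\B:=\ell^1(H)$ is a closed two-sided ideal of $\A$ and a closed $\ell^1(E)$-submodule carrying the inherited trivial left action, so the standing hypotheses on $\B$ in Theorem \ref{2.2} are satisfied.

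Next I would identify the quotients. A short computation shows that the module ideal $J_\B$ — the closed ideal of $\ell^1(H)$ generated by the elements $(\delta_h\cdot\delta_e)\delta_{h'}-\delta_h(\delta_e\cdot\delta_{h'})=\delta_{heh'}-\delta_{hh'}$ with $h,h'\in H$ and $e\in E$ — in fact coincides with $J'$, the closed linear span of $\{\delta_{set}-\delta_{st}:s,t\in H,\ e\in E'\}$: one inclusion is clear, and for the other one uses that $hh^*\in E'$ and $h^*he\in E'$ to rewrite each generator as a difference of two elements of the form $\delta_{sft}-\delta_{st}$ with $f\in E'$. In particular $J_\B=J_H\subseteq J$, and, by the argument used for $S$ (the analogue of \cite[Theorem 3.3]{ra1}), $\B/J_\B=\ell^1(H)/J_H\cong\ell^1(G_H)$ while $\A/J=\ell^1(S)/J\cong\ell^1(G_S)$; since $G_H$ is a discrete group, $\ell^1(G_H)$ is unital.

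The decisive step is then immediate. Because $J_\B\subseteq J$, $J$ is an ideal of $\A$, and $J_\B\cdot\A\subseteq J$, the quotient algebra $\B/J_\B$ acts on $\A/J$ via $(b+J_\B)\cdot(a+J)=ba+J$. The identity $u$ of $\ell^1(G_H)\cong\B/J_\B$ is a bounded approximate identity of $\B/J_\B$, and by hypothesis $u$, acting through this action on $\A/J\cong\ell^1(G_S)$, is an identity for $\A/J$ — hence, trivially, a bounded approximate identity for $\A/J$. All hypotheses of Theorem \ref{2.2} now hold, so $\ell^1(S)$ is module biflat as an $\ell^1(E)$-module, and the equivalence from \cite{bod} gives that $S$ is amenable.

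I expect the main obstacle to be bookkeeping rather than substance: verifying that $J_\B$, computed with the full idempotent algebra $\ell^1(E)$, equals the ``semigroup'' ideal $J_H$ built from $E'=E\cap H$, and — more importantly — checking that the abstract hypothesis ``the identity of $\ell^1(G_H)$ is an identity for $\ell^1(G_S)$'' is exactly the special case of Theorem \ref{2.2}'s bounded-approximate-identity requirement needed here, i.e., that the $\B/J_\B$-action on $\A/J$ implicit in the proof of Theorem \ref{2.2} is the one under which this statement is to be read. The remaining points (that $H$ is an inverse subsemigroup, the identifications of the quotients with group algebras, and the passage of the trivial left action to $\B$) are routine.
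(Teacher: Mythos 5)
Your proposal is correct and follows exactly the route of the paper's (much terser) proof: apply Theorem \ref{2.2} with $\B=\ell^1(H)$, using that the identity of $\B/J_\B\cong\ell^1(G_H)$ is by hypothesis a (bounded approximate) identity for $\A/J\cong\ell^1(G_S)$, to get module biflatness of $\ell^1(S)$, and then invoke the equivalence from \cite{bod} between module biflatness of $\ell^1(S)$ and amenability of $S$. The structural verifications you supply (that $H$ is an inverse subsemigroup, that $J_\B=J_H$, and the identification of the quotients with group algebras) are details the paper leaves implicit.
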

\begin{proof}
By Theorem \ref{2.2}, $\ell^1(S)$ is module biflat. Now, it follows from \cite[Theorem 3.2]{bod} that $S$ is amenable .
\end{proof}

Let $(\A_\gamma)_{\gamma\in\Gamma}$ be a family of Banach $\mathfrak{A}$-bimodules. For every $\gamma\in\Gamma$, we consider the $\mathfrak{A}$-module mappings $\pi_{\gamma}:((\A_{\gamma}\otimes\A_{\gamma})/I_{\gamma})^*\longrightarrow(\A_{\gamma}/J_{\gamma})^*$ and $\omega_{\gamma}:((\A_{\gamma}\otimes\A_{\gamma})/I_{\gamma})\longrightarrow(\A_{\gamma}/J_{\gamma})$.
\begin{thm}
Let $\A$ be a Banach $\mathfrak{A}$-bimodule, let $(\A_\gamma)_{\gamma\in\Gamma}$ be a family of module biflat closed ideals in $\A$, and let $\A$ has a bounded approximate identity contained in $\cup_\gamma \A_\gamma$. If  $\sup_\gamma \|\pi_\gamma\|<\infty$ such that $\pi_\gamma\circ\widetilde{\omega}_\gamma^*$ is identity mapping, for every $\gamma$, then $\A$ is module biflat.
\end{thm}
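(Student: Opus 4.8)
The plan is to verify condition (ii) of Theorem~\ref{2.1}: I will produce an $\A/J$-$\mathfrak{A}$-module homomorphism $\overline{\rho}\colon\A/J\longrightarrow((\A\widehat{\otimes}\A)/I)^{**}$ with $\widetilde{\omega}^{**}\circ\overline{\rho}$ the canonical embedding of $\A/J$ into $\A^{**}/J^{\bot\bot}$, and then quote Theorem~\ref{2.1}. The construction is a ``partition of unity'' version of the proof of Theorem~\ref{2.2}: there a single ideal $\B$ carried the whole argument, here the family $(\A_\gamma)$ does it, and the bounded approximate identity $(e_j)$ of $\A$ --- whose members lie in $\bigcup_\gamma\A_\gamma$, so each sits in some $\A_{\gamma_j}$ --- glues the local data together. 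As in Theorem~\ref{2.2} I keep the standing assumption that $(\A\widehat{\otimes}\A)/I$ is an $\A/J$-bimodule; note that $\A/J$ then also has a bounded approximate identity, the image of $(e_j)$.

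\emph{Localizing.} Fix $\gamma$. Module biflatness of $\A_\gamma$ provides a bounded left inverse $\pi_\gamma$ of $\widetilde{\omega}_\gamma^*$ that is an $\A_\gamma/J_\gamma$-$\mathfrak{A}$-module homomorphism, and by hypothesis the $\pi_\gamma$ can be taken with $M:=\sup_\gamma\|\pi_\gamma\|<\infty$. As in the proof of (i)$\Rightarrow$(ii) of Theorem~\ref{2.1}, $\rho_\gamma:=\pi_\gamma^*|_{\A_\gamma/J_\gamma}\colon\A_\gamma/J_\gamma\longrightarrow((\A_\gamma\widehat{\otimes}\A_\gamma)/I_\gamma)^{**}$ is an $\A_\gamma/J_\gamma$-$\mathfrak{A}$-module homomorphism with $\|\rho_\gamma\|\le M$ and $\widetilde{\omega}_\gamma^{**}\circ\rho_\gamma$ equal to the canonical embedding of $\A_\gamma/J_\gamma$ into its bidual. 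Since $\A_\gamma$ is a closed ideal, the inclusion $i_\gamma\colon\A_\gamma\longrightarrow\A$ maps $I_\gamma$ into $I$ and $J_\gamma$ into $J$, hence induces a contraction $i_\gamma\otimes_{\mathfrak{A}}i_\gamma\colon(\A_\gamma\widehat{\otimes}\A_\gamma)/I_\gamma\longrightarrow(\A\widehat{\otimes}\A)/I$ and a homomorphism $\kappa_\gamma\colon\A_\gamma/J_\gamma\longrightarrow\A/J$, and on elementary tensors one checks $\widetilde{\omega}\circ(i_\gamma\otimes_{\mathfrak{A}}i_\gamma)=\kappa_\gamma\circ\widetilde{\omega}_\gamma$. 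Put $\widetilde{\rho}_\gamma:=(i_\gamma\otimes_{\mathfrak{A}}i_\gamma)^{**}\circ\rho_\gamma$; these are module homomorphisms into $((\A\widehat{\otimes}\A)/I)^{**}$ with $\sup_\gamma\|\widetilde{\rho}_\gamma\|\le M$, and taking second adjoints above shows that $\widetilde{\omega}^{**}\circ\widetilde{\rho}_\gamma$ is $\kappa_\gamma$ followed by the canonical embedding into $(\A/J)^{**}$.

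\emph{Gluing.} Let $K=\sup_j\|e_j\|$. For $a\in\A$ the element $\widetilde{\rho}_{\gamma_j}(e_j+J_{\gamma_j})\cdot(a+J)$ lies in the ball of radius $MK\|a+J\|$ of $((\A\widehat{\otimes}\A)/I)^{**}$; since bounded sets there are weak$^*$-compact, a standard subnet argument in the product over $a\in\A$ yields a single subnet along which, for every $a$, this net converges weak$^*$, and I define $\overline{\rho}(a+J)$ to be the limit. Then $\overline{\rho}$ is linear and bounded, and its right $\A/J$-module and its $\mathfrak{A}$-module homomorphism properties follow in the same way as in Theorem~\ref{2.2}, because one-sided multiplication of a bidual element by a fixed element of the algebra (or of $\mathfrak{A}$) is weak$^*$--weak$^*$ continuous and so commutes with the limit, carrying over the corresponding identities for the $\widetilde{\rho}_{\gamma_j}$. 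For the left $\A/J$-module property one reduces, as in Theorem~\ref{2.2}, to $\text{weak}^*-\lim_j\bigl[\widetilde{\rho}_{\gamma_j}(e_j+J_{\gamma_j})\cdot(b+J)-(b+J)\cdot\widetilde{\rho}_{\gamma_j}(e_j+J_{\gamma_j})\bigr]=0$ for $b\in\A$: the ideal property keeps $e_jb$ and $be_j$ in $\A_{\gamma_j}$, so the module homomorphism property of $\widetilde{\rho}_{\gamma_j}$ turns the bracket into $\widetilde{\rho}_{\gamma_j}(e_jb-be_j+J_{\gamma_j})$, and $\|\widetilde{\rho}_{\gamma_j}\|\le M$ together with $e_jb-be_j\to0$ (as $(e_j)$ is a bounded approximate identity for $\A$) forces the limit to be $0$; Cohen's factorization theorem, applicable since $\A/J$ has a bounded approximate identity, handles the passage from factored to general elements and the independence of $\overline{\rho}$ from the chosen subnet. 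Finally, by \emph{Localizing}, $\widetilde{\omega}^{**}\bigl(\widetilde{\rho}_{\gamma_j}(e_j+J_{\gamma_j})\cdot(a+J)\bigr)$ is the canonical image of $\kappa_{\gamma_j}(e_j+J_{\gamma_j})\cdot(a+J)=e_ja+J$ in $(\A/J)^{**}$; as $\widetilde{\omega}^{**}$ is a module map, weak$^*$--weak$^*$ continuous, and $e_ja\to a$, this gives $\widetilde{\omega}^{**}(\overline{\rho}(a+J))=a+J$. Hence $\widetilde{\omega}^{**}\circ\overline{\rho}$ is the canonical embedding, and by Theorem~\ref{2.1} $\A$ is module biflat.

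The main obstacle is the \emph{Gluing} step: making the weak$^*$ cluster-point definition of $\overline{\rho}$ legitimate uniformly in $a$ and, above all, verifying every one-sided $\A/J$- and $\mathfrak{A}$-module-homomorphism identity --- since each $\widetilde{\rho}_{\gamma_j}$ is only a homomorphism over its own $\A_{\gamma_j}/J_{\gamma_j}$, the upgrade to $\A/J$ has to be extracted entirely from the ideal property of the $\A_\gamma$ together with the approximate identity and Cohen's theorem, which is exactly the device used in Theorem~\ref{2.2} but now spread across a whole family rather than carried by one ideal.
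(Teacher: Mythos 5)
Your proof is essentially sound, but it reaches the conclusion through a different clause of Theorem~\ref{2.1} than the paper does, and the difference is worth noting. The paper verifies criterion (iii): for each $\eta=(F,\Phi,\varepsilon)$ it picks $e_{\lambda_0}\in\A_{\gamma_0}$ with $\|ae_{\lambda_0}+J_{\gamma_0}-(a+J)\|<\varepsilon$ on the finite set $F$, forms the \emph{dual-side} map $\pi_\eta:=\rho_0^*\circ\pi_{\gamma_0}\circ(i\otimes_{\mathfrak{A}}i)^*$ with $\rho_0(a+J)=ae_{\lambda_0}+J_{\gamma_0}$, checks $\langle a+J,\pi_\eta\circ\widetilde{\omega}^*(f)-f\rangle\to0$, and then lets the $W^*OT$-compactness built into (iii)$\Rightarrow$(i) produce the left inverse. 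You instead verify criterion (ii) by a \emph{bidual-side} gluing modeled on Theorem~\ref{2.2}: you push each local $\rho_\gamma=\pi_\gamma^*|_{\A_\gamma/J_\gamma}$ forward and take a weak$^*$ cluster point of $\widetilde{\rho}_{\gamma_j}(e_j+J_{\gamma_j})\cdot(a+J)$. The two constructions are adjoint to one another in spirit (the adjoint of the paper's $\pi_\eta$ restricted to $\A/J$ sends $a+J$ to $\widetilde{\rho}_{\gamma_0}(ae_{\lambda_0}+J_{\gamma_0})$, which differs from your $\widetilde{\rho}_{\gamma_j}(e_j+J_{\gamma_j})\cdot(a+J)$ only in where the multiplication by $a$ is performed), but the paper's route is more economical: the limit and all module-homomorphism properties of the limiting map are handled once and for all inside Theorem~\ref{2.1}, whereas you must re-prove them by hand for $\overline{\rho}$ --- which is exactly where your argument gets delicate (the commutator reduction for the left $\A/J$-action, the compatibility of $(i_\gamma\otimes_{\mathfrak{A}}i_\gamma)^{**}$ with actions of elements of $\A$ outside $\A_\gamma$, the appeal to Cohen's theorem). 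Your version also leans on the standing convention that $((\A\widehat{\otimes}\A)/I)^{**}$ carries an $\A/J$-bimodule structure in order for the expression $\widetilde{\rho}_{\gamma_j}(e_j+J_{\gamma_j})\cdot(a+J)$ to make sense, while the paper's $\rho_0$ uses only multiplication inside $\A$ itself. Both arguments work at the same level of rigor as the paper's Theorem~\ref{2.2}; yours is simply the harder road to the same place.
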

\begin{proof}
Let $(e_\lambda)_\lambda$ be a bounded approximate identity for $\A$ which contained in $\cup_\gamma \A_\gamma$, let $\eta=(F,\Phi,\varepsilon)$ where $F\subset\A$ and $\Phi\in\A^*$ are finite sets. Choose $\gamma_0\in\Gamma$ such that $e_{\lambda_0}\in\A_{\gamma_0}$ and $\|ae_{\gamma_0}+J_{\gamma_0}-(a+J)\|<\varepsilon$ for every $\varepsilon>0$. Consider the mappings $i:\A_{\gamma_0}\longrightarrow\A$ and $\rho_0:\A/J\longrightarrow\A_{\gamma_0}/J_{\gamma_0}$ by $i_0(a)=a$ and $\rho_0(a+J)=ae_{\lambda_0}+J_{\gamma_0}$ for all $a\in\A$. Since every $\A_\gamma$ is module biflat, there is a bounded left inverse $\A_{\gamma_0}/J_{\gamma_0}$-$\mathfrak{A}$-module homomorphism $\pi_{\gamma_0}:((\A_{\gamma_0}\otimes\A_{\gamma_0})/I_{\gamma_0})^*\longrightarrow(\A_{\gamma_0}/J_{\gamma_0})^*$ such that $\pi_{\gamma_0}\circ\widetilde{\omega}_{\A_{\gamma_0}}^*=id_{(\A_{\gamma_0}/J_{\gamma_0})^*}$. It is easily verified that $(i\otimes_\mathfrak{A}i)^*\circ\widetilde{\omega}^*|_{\A_{\gamma_0}}=\widetilde{\omega}_{\A_{\gamma_0}}^*$. Letting $\pi_\eta:=\rho_0^*\circ\pi_{\gamma_0}\circ(i\otimes_\mathfrak{A}i)^*$, we get
\begin{align*}
| \langle a+J, \pi_\eta\circ\widetilde{\omega}^*(f)-f\rangle | &= |\langle a+J, \rho_0^*\circ\pi_{\gamma_0}\circ(i\otimes_\mathfrak{A}i)^*\circ\widetilde{\omega}^*(f)-f\rangle| \\
   &\leq|\langle \rho_0(a+J),\pi_{\gamma_0}\circ(i\otimes_\mathfrak{A}i)^*\circ\widetilde{\omega}^*(f)-f\rangle|\\
   &+ |\langle \rho_0(a+J)-(a+J),f\rangle|\\
   &= |\langle ae_{\lambda_0}+J_{\gamma_0},\pi_{\gamma_0}\circ \widetilde{\omega}_{\A_{\gamma_0}}^*(f|_{\A_{\gamma_0}/J_{\gamma_0}})-f|_{\A_{\gamma_0}/J_{\gamma_0}}\rangle|\\
   &+|\langle ae_{\lambda_0}+J_{\gamma_0}-(a+J),f\rangle|\\
   &\rightarrow 0.
\end{align*}
Thus $\lim W^*OT\pi_\eta\circ\widetilde{\omega}^*=id_{(\A/J)^*}$. According to our assumption $\pi_\eta$ is uniformly bounded. The proof now is complete by Theorem \ref{2.1}.
\end{proof}

Let $\A$ and $\B$ be Banach $\mathfrak{A}$-bimodules. Consider the projective module tensor product $\A\widehat{\otimes}_\mathfrak{A}\B$. This product is the quotient of the usual projective tensor product $\A\widehat{\otimes} \B$ by the closed ideal $\mathcal{I}$ generated by $a\cdot\alpha\otimes b-a\otimes\alpha \triangleleft b$ for every $a\in\A$, $b\in\B$ and $\alpha\in\mathfrak{A}$. We denote the space of all bounded $\mathfrak{A}$-module maps from $\A$ into $\B$ by $\mathcal{L}_\mathfrak{A}(\A, \B)$. By the following module actions $\mathcal{L}_\mathfrak{A}(\A, \B)$ is a Banach $\A\widehat{\otimes}_\mathfrak{A} \B$-$\mathfrak{A}$-bimodule
\begin{equation}\label{ma1}
    \langle (a\otimes b)\star T,c\rangle=bT(ca),\hspace{0.5cm}\hspace{0.5cm}\langle \alpha\bullet T,a\rangle=\langle T,a\cdot\alpha\rangle,
\end{equation}
\begin{equation}\label{ma2}
    \langle T\star(a\otimes b),c\rangle= T(ac)b\hspace{0.5cm}\emph{\emph{and}}\hspace{0.5cm}\langle T\bullet\alpha,a\rangle=\langle T,\alpha \cdot a\rangle,
\end{equation}
for every $a, c\in\A$, $b\in\B$ and $\alpha\in\mathfrak{A}$. We also denote the Banach $\A\widehat{\otimes}_\mathfrak{A} \B$-$\mathfrak{A}$-bimodule, $\mathcal{L}_\mathfrak{A}(\A, \B)$ by the above actions by $\widetilde{\mathcal{L}}_\mathfrak{A}(\A, \B)$. Similarly,  $\mathcal{L}_\mathfrak{A}(\B, \A)$ is a Banach $\A\widehat{\otimes}_\mathfrak{A} \B$-$\mathfrak{A}$-bimodule which we denote it by $\overline{\mathcal{L}}_\mathfrak{A}(\B, \A)$ with the actions given by
\begin{equation}\label{ma3}
    \langle (a\otimes b)\star T,c\rangle= aT(cb),\hspace{0.5cm}\hspace{0.5cm}\langle \alpha\bullet T,b\rangle=\langle T,b\triangleleft\alpha\rangle,
\end{equation}
\begin{equation}\label{ma4}
    \langle  T\star(a\otimes b),c\rangle= T(ac)b \hspace{0.5cm}\emph{\emph{and}}\hspace{0.5cm}\langle T\bullet\alpha,b\rangle=\langle T,\alpha \triangleleft b\rangle,
\end{equation}
for every $a\in\A$, $b,c\in\B$ and $\alpha\in\mathfrak{A}$. We have $\mathcal{L}_\mathfrak{A}(\A, \B^*)\cong(\A\widehat{\otimes}_\mathfrak{A}\B)^*$, and similarly $\mathcal{L}_\mathfrak{A}(\B, \A^*)\cong(\B\widehat{\otimes}_\mathfrak{A}\A)^*$.  Let $\lambda\in(\A\widehat{\otimes}\B)^*$. Define $\widetilde{T}_\lambda\in\mathcal{L}_\mathfrak{A}(\A, \B^*)$ and $\overline{T}_\lambda\in\mathcal{L}_\mathfrak{A}(\B, \A^*)$ as follow:
\begin{equation}\label{ii}
    \langle \widetilde{T}_\lambda(a),b\rangle=\langle \lambda,a\otimes b\rangle\hspace{0.5cm}\emph{\emph{and}}\hspace{0.5cm} \langle \overline{T}(b),a\rangle=\langle\lambda, a\otimes b\rangle.
\end{equation}
Consider the following isometric $\A\widehat{\otimes}_\mathfrak{A} \B$-$\mathfrak{A}$-bimodule isomorphisms maps
\begin{equation}\label{is1}
    (\A\widehat{\otimes}_\mathfrak{A}\B)^*\longrightarrow\widetilde{\mathcal{L}}_\mathfrak{A}(\A, \B^*),\hspace{0.5cm} \lambda\mapsto\widetilde{T}_\lambda
\end{equation}
and
\begin{equation}\label{is2}
    (\A\widehat{\otimes}_\mathfrak{A}\B)^*\longrightarrow\overline{\mathcal{L}}_\mathfrak{A}(\B, \A^*),\hspace{0.5cm} \lambda\mapsto\overline{T}_\lambda.
\end{equation}
By the relations (\ref{ma1})-(\ref{is2}), we can prove the following theorem.
\begin{thm}
Let $\A$ be an $\mathfrak{A}$-bimodule. If $\mathcal A$ is module biflat, then $(\widetilde{\omega}{\otimes}_\mathfrak{A}\widetilde{\omega})^*$ has a bounded left inverse.
\end{thm}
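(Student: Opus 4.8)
The plan is to move the whole statement to spaces of $\mathfrak{A}$-module maps by means of the isometric isomorphisms (\ref{is1})--(\ref{is2}), and there to build an explicit bounded left inverse out of the map $\rho$ furnished by Theorem~\ref{2.1}. Applying $\mathcal{L}_\mathfrak{A}(\mathcal X,\mathcal Y^*)\cong(\mathcal X\widehat{\otimes}_\mathfrak{A}\mathcal Y)^*$ first with $\mathcal X=\mathcal Y=\A/J$ and then with $\mathcal X=\mathcal Y=(\A\widehat{\otimes}\A)/I$, one reads off from the module actions (\ref{ma1})--(\ref{is2}) that, under these identifications, the operator $(\widetilde{\omega}\otimes_\mathfrak{A}\widetilde{\omega})^*$ is the map
$$\Psi\colon\mathcal{L}_\mathfrak{A}(\A/J,(\A/J)^*)\longrightarrow\mathcal{L}_\mathfrak{A}\big((\A\widehat{\otimes}\A)/I,((\A\widehat{\otimes}\A)/I)^*\big),\qquad\Psi(T)=\widetilde{\omega}^*\circ T\circ\widetilde{\omega}.$$
Hence it suffices to construct a bounded linear left inverse of $\Psi$; composing it back with the two isometric isomorphisms then gives a bounded left inverse of $(\widetilde{\omega}\otimes_\mathfrak{A}\widetilde{\omega})^*$.

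Since $\A$ is module biflat, Theorem~\ref{2.1} supplies an $\A/J$-$\mathfrak{A}$-module homomorphism $\rho\colon\A/J\to((\A\widehat{\otimes}\A)/I)^{**}$ with $\widetilde{\omega}^{**}\circ\rho=\iota_{\A/J}$, the canonical embedding of $\A/J$ into $(\A/J)^{**}$ (here we identify $\A^{**}/J^{\bot\bot}$ with $(\A/J)^{**}$). I would then put
$$\Theta(S)=\rho^*\circ S^{**}\circ\rho\qquad\big(S\in\mathcal{L}_\mathfrak{A}((\A\widehat{\otimes}\A)/I,((\A\widehat{\otimes}\A)/I)^*)\big),$$
where $S^{**}\colon((\A\widehat{\otimes}\A)/I)^{**}\to((\A\widehat{\otimes}\A)/I)^{***}$ and $\rho^*\colon((\A\widehat{\otimes}\A)/I)^{***}\to(\A/J)^*$, so that $\Theta(S)\colon\A/J\to(\A/J)^*$. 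Linearity of $\Theta$ is clear and $\|\Theta(S)\|\le\|\rho\|^2\|S\|$; moreover $\Theta(S)$ is an $\mathfrak{A}$-module map because $\rho$, $S$, and hence $S^{**}$ and $\rho^*$, are, so $\Theta$ indeed maps into $\mathcal{L}_\mathfrak{A}(\A/J,(\A/J)^*)$.

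It then remains to check that $\Theta\circ\Psi=\mathrm{id}$. Fix $T$ and set $S=\Psi(T)=\widetilde{\omega}^*T\widetilde{\omega}$; taking adjoints twice gives $S^{**}=\widetilde{\omega}^{***}\circ T^{**}\circ\widetilde{\omega}^{**}$. For $x,y\in\A/J$, using the defining relations of the adjoints, the identity $\widetilde{\omega}^{**}\circ\rho=\iota_{\A/J}$, and $T^{**}\circ\iota_{\A/J}=\iota_{(\A/J)^*}\circ T$, one computes
$$\langle\Theta(S)x,y\rangle=\langle S^{**}\rho(x),\rho(y)\rangle=\langle T^{**}\iota_{\A/J}(x),\iota_{\A/J}(y)\rangle=\langle\iota_{(\A/J)^*}(Tx),\iota_{\A/J}(y)\rangle=\langle Tx,y\rangle,$$
so $\Theta(S)=T$. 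Thus $\Theta$ is a bounded left inverse of $\Psi$, and transporting it back through (\ref{is1})--(\ref{is2}) yields the desired bounded left inverse of $(\widetilde{\omega}\otimes_\mathfrak{A}\widetilde{\omega})^*$.

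The step I expect to be the real obstacle is the first one: identifying $(\widetilde{\omega}\otimes_\mathfrak{A}\widetilde{\omega})^*$ with $\Psi$ rigorously — verifying that $\widetilde{\omega}\otimes_\mathfrak{A}\widetilde{\omega}$ is well defined on the module projective tensor product (i.e.\ respects the closed ideal $\mathcal I$ defining $\widehat{\otimes}_\mathfrak{A}$), and keeping track of which of the two isomorphisms (\ref{is1}), (\ref{is2}) is used on each side so that no spurious transposition is introduced. Once that normalization is pinned down, the triple-dual bookkeeping in the last paragraph is entirely routine, and the module-map checks for $\Theta(S)$ reduce to the fact that adjoints of $\mathfrak{A}$-module maps are again $\mathfrak{A}$-module maps.
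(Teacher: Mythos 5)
Your proposal is correct, and its first step coincides with the paper's: both arguments use the isomorphisms (\ref{is1})--(\ref{is2}) to transport $(\widetilde{\omega}\otimes_\mathfrak{A}\widetilde{\omega})^*$ to the map $T\mapsto\widetilde{\omega}^*\circ T\circ\widetilde{\omega}$ on spaces of $\mathfrak{A}$-module maps. Where you diverge is in how the left inverse is built. The paper stays at the first-dual level: it takes the left inverse $\pi$ of $\widetilde{\omega}^*$ coming straight from the definition of module biflatness, declares $\Lambda=\pi\otimes_\mathfrak{A}\pi$, and verifies $\Lambda\circ(\widetilde{\omega}\otimes_\mathfrak{A}\widetilde{\omega})^*=\mathrm{id}$ by the four-step chain (\ref{is3}), alternating between $\widetilde{\mathcal{L}}_\mathfrak{A}$ and $\overline{\mathcal{L}}_\mathfrak{A}$ so that $\pi$ can be applied in each tensor slot in turn; the price is exactly the ``flip'' bookkeeping you identified as the delicate point. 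You instead invoke Theorem~\ref{2.1}(ii) to get $\rho$ with $\widetilde{\omega}^{**}\circ\rho=\iota_{\A/J}$ and set $\Theta(S)=\rho^*\circ S^{**}\circ\rho$; this pushes the argument up to second and third duals, but the verification $\Theta\circ\Psi=\mathrm{id}$ collapses to a one-line adjoint computation with no slot-by-slot identifications, and the norm bound $\|\Theta\|\le\|\rho\|^2$ is immediate. Your computation checks out (with $S^{**}=\widetilde{\omega}^{***}T^{**}\widetilde{\omega}^{**}$ and $T^{**}\circ\iota_{\A/J}=\iota_{(\A/J)^*}\circ T$), and since the theorem only asks for a \emph{bounded} left inverse, the module-map property of $\Theta(S)$ is needed only to return through (\ref{is1})--(\ref{is2}), which you correctly address. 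In short: same reduction, genuinely different construction of the inverse --- the paper's is the more ``symmetric'' tensor-square of $\pi$, yours trades extra duals for a cleaner verification.
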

\begin{proof}
Assume that $\pi:(\A\widehat{\otimes}_\mathfrak{A}\A)^*\longrightarrow(\A/J)^*$ is a left inverse for $\widetilde{\omega}^*:(\A/J)^*\longrightarrow(\A\widehat{\otimes}_\mathfrak{A}\A)^*$ such that $\pi\circ\widetilde{\omega}^*=$id$_{(\A/J)^*}$. Consider
\begin{equation}\label{}
   \widetilde{\omega}{\otimes}_\mathfrak{A}\widetilde{\omega}:(\A\widehat{\otimes}_\mathfrak{A}\A)
    {\widehat{\otimes}}_\mathfrak{A}(\A\widehat{\otimes}_\mathfrak{A}\A)\longrightarrow(\A/J)\widehat{\otimes}_\mathfrak{A}(\A/J)
\end{equation}
 as the usual definition. We wish to show that $(\widetilde{\omega}{\otimes}_\mathfrak{A}\widetilde{\omega})^*$ has a bounded left inverse $\Lambda$
 such that $\Lambda\circ(\widetilde{\omega}{\otimes}_\mathfrak{A}\widetilde{\omega})^*=id_{((\A/J)\widehat{\otimes}_\mathfrak{A}(\A/J))^*}$. Put $\Lambda=\pi{\otimes}_\mathfrak{A}\pi$. It is clear that $\Lambda$ is bounded. We have
  \begin{align}\label{is3}
  \nonumber
 &  \Lambda\circ(\widetilde{\omega}{\otimes}_\mathfrak{A}\widetilde{\omega})^*\\ \nonumber &:((\A/J)\widehat{\otimes}_\mathfrak{A}(\A/J))^*\cong\overline{\mathcal{L}}_\mathfrak{A}(\A/J,(\A/J)^*)\stackrel{\overline{T}_\lambda\mapsto\omega^*\circ \overline{T}_\lambda}{\longrightarrow}\overline{\mathcal{L}}_\mathfrak{A}((\A/J),(\A\widehat{\otimes}_\mathfrak{A}\A)^*) \\
    \nonumber
&\cong \widetilde{\mathcal{L}}_\mathfrak{A}(\A\widehat{\otimes}_\mathfrak{A}\A,(\A/J)^*) \stackrel{\widetilde{T}_\lambda\mapsto\omega^*\circ \widetilde{T}_\lambda}{\longrightarrow}\widetilde{\mathcal{L}}_\mathfrak{A}(\A\widehat{\otimes}_\mathfrak{A}\A,(\A\widehat{\otimes}_\mathfrak{A}\A)^* ) [\emph{\emph{ by}} ~(\ref{is1})~ \emph{\emph{and}}~ (\ref{is2})] \\
     \nonumber
     &  \stackrel{\widetilde{T}_\lambda\mapsto\pi\circ \widetilde{T}_\lambda}{\longrightarrow}\widetilde{\mathcal{L}}_\mathfrak{A}\Big{(}\A\widehat{\otimes}_\mathfrak{A}\A,(\A/J)^*\Big{)}\cong\overline{\mathcal{L}}_\mathfrak{A}(\A/J,(\A\widehat{\otimes}_\mathfrak{A}\A)^*)\\
      &   \stackrel{\overline{T}_\lambda\mapsto\pi\circ \overline{T}_\lambda}{\longrightarrow}\overline{\mathcal{L}}_\mathfrak{A}(\A/J,(\A/J)^*)\cong((\A/J)\widehat{\otimes}_\mathfrak{A}(\A/J))^*.
  \end{align}
By using (\ref{is3}), we conclude that $\pi{\otimes}_\mathfrak{A}\pi$ is a left inverse of $(\widetilde{\omega}{\otimes}_\mathfrak{A}\widetilde{\omega})^*$.
\end{proof}



\section{MODULE AMENABILITY OF THE SECOND DUAL}
In this section we explore conditions under which $\mathcal
A^{**}$ is module amenable.
Recall that a Banach algebra ${\mathcal A}$
is module amenable (as an ${\mathfrak A}$-bimodule) if $\mathcal H^1_{\mathfrak A}(\mathcal A,
X^{*})=\{0\}$, for each commutative Banach ${\mathcal
A}$-${\mathfrak A}$-module $X$, where $\mathcal H^1_{\mathfrak A}(\mathcal A,
X^{*})$ is the first ${\mathfrak A}$-module cohomology group of
${\mathcal A}$ with coefficients in $X^*$ \cite{am1}.

We denote by $\square$ the first Arens product on $\mathcal{A}^{**}$, the second dual of $\mathcal{A}$. Here and subsequently, we assume that $\mathcal A^{**}$ is equipped with the first Arens product.

Let $\widetilde{\omega}$ be as in section 2. Let $I$ and $J$ be the corresponding closed ideals
of $\mathcal A \widehat \otimes \mathcal A$ and $\mathcal A$,
respectively. Consider the homomorphism
$$ \widetilde{\omega}^*: ({\mathcal A}/{J})^*=J^{\perp} \longrightarrow
 ({\mathcal A}\widehat
\otimes _{\mathfrak A} {\mathcal A})^*={\mathcal L}_{\mathfrak
 A}({\mathcal A},{\mathcal A^*}),$$
$$\langle[\widetilde{\omega}^*(f)](a),b\rangle=f(ab+J), \hspace{0.3cm}f \in ({\mathcal
A}/{J})^*.$$

Let ${\mathcal A}^{**}\widehat \otimes _{\mathfrak A} {\mathcal
A}^{**}$ be the projective module tensor product of ${\mathcal
A}^{**}$ and ${\mathcal A}^{**}$, that is ${\mathcal
A}^{**}\widehat \otimes _{\mathfrak A} {\mathcal
A}^{**}=({\mathcal A^{**} \widehat \otimes \mathcal
A^{**}})/{M}$, where $M$ is a closed ideal generated by elements
of the form $F \cdot\alpha\otimes G-F \otimes \alpha\cdot G$ for
$ \alpha\in {\mathfrak A},F,G\in{\mathcal A}^{**}$. Consider
 $$\widehat{\omega}: {\mathcal A}^{**}\widehat
\otimes _{\mathfrak A} {\mathcal A}^{**} \longrightarrow
{\mathcal A^{**}}/{N}; \quad (F \otimes G)+M\mapsto F \square G+
N,$$ where $ N $ is the closed ideal of ${\mathcal A}^{**}$
generated by $\widehat{\omega}(M)$. We know that
$$ \widehat{\omega}^*:({\mathcal A^{**}}/N)^* \longrightarrow
 ({\mathcal A}^{**}\widehat
\otimes _{\mathfrak A} {\mathcal A}^{**})^*={\mathcal
L}_{\mathfrak A}(\mathcal A^{**},\mathcal A^{***})$$
$$\langle[\widehat{\omega}^*(\phi)]F,G\rangle=\phi(F\square G+ N), \hspace{1cm} (F,G\in{\mathcal A}^{**}), $$
where $\phi\in {\mathcal A^{**}}/N$. Let $T \in ({\mathcal
A}\widehat \otimes _{\mathfrak A} {\mathcal A})^*={\mathcal
L}_{\mathfrak A}({\mathcal A},{\mathcal A^*})$ and $T^*$ ,$T^{**}$

be the first and second conjugates of $T$, then $T^{**} \in
({\mathcal A}^{**}\widehat \otimes _{\mathfrak A} {\mathcal
A}^{**})^{*}={\mathcal L}_{\mathfrak A}({\mathcal
A^{**}},{\mathcal A}^{***})$ with $\langle
T^{**}(F),G\rangle=\lim_j\lim_k\langle T(a_j),b_k)\rangle$, where
$(a_j),(b_k)$ are bounded nets in $ \mathcal A$
 such that  $ \hat a_j \stackrel{w^*}{\longrightarrow}F$
and $ \hat b_k \stackrel{w^*}{\longrightarrow}G$.

\begin{lem}
There exists an $\mathcal A$-$\mathfrak A$-module homomorphism
$$\Lambda: {\mathcal L}_{\mathfrak A}({\mathcal A},{\mathcal
A^*})\longrightarrow {\mathcal L}_{\mathfrak A}({\mathcal
A^{**}},{\mathcal A}^{***})$$ such that for every
 $T\in {\mathcal L}_{\mathfrak A}({\mathcal A},{\mathcal A^*})$, $F,G \in \mathcal A^{**}$,
and bounded nets $(a_j),(b_k)\subset \mathcal A $ with $ \hat a_j
\stackrel{J^{\perp}}{\longrightarrow}F$ and $ \hat b_k
\stackrel{J^{\perp}}{\longrightarrow}G$ \emph{(}where the superscript
$J^\perp$ shows the convergence in the weak topology $\sigma
(\mathcal A^{**},J^\perp)$ generated the family $J^\perp$ of
$w^*$-continuous functionals on $\mathcal A^{**}$\emph{)}, we have
$$\langle\Lambda (T)F,G\rangle= \lim_j\lim_k \langle T(a_j),b_k\rangle.$$
\end{lem}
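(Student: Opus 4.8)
The plan is to take $\Lambda(T):=T^{**}$, the second conjugate of $T$ already introduced above, so that $\Lambda$ is automatically a bounded linear map with $\|\Lambda(T)\|=\|T\|$, and then to prove the two pieces of the assertion separately: (a) the value $\langle\Lambda(T)F,G\rangle$ equals $\lim_j\lim_k\langle T(a_j),b_k\rangle$ already for bounded nets converging only in the weaker topology $\sigma(\mathcal{A}^{**},J^{\perp})$, and (b) $\Lambda$ respects the $\mathcal{A}$-$\mathfrak{A}$-module structures. Throughout I would use the elementary identity $\langle T^{**}(F),G\rangle=\langle F,T^{*}(G)\rangle$, where $T^{*}:\mathcal{A}^{**}\to\mathcal{A}^{*}$ is the first conjugate of $T$.

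For (a) the decisive preliminary step is that $T$ descends to the quotient $\mathcal{A}/J$. This holds for every $T$ in the range of $\widetilde{\omega}^{*}$ (equivalently, every $T$ whose associated functional on $\mathcal{A}\widehat{\otimes}_{\mathfrak{A}}\mathcal{A}$ kills $\ker\widetilde{\omega}$): if $T=\widetilde{\omega}^{*}(f)$ then $\langle T(a),m\rangle=f(am+J)=0$ and $\langle T(m),b\rangle=f(mb+J)=0$ for all $m\in J$, since $J$ is an ideal, so $T(a)\in J^{\perp}=(\mathcal{A}/J)^{*}$ and $T(J)=\{0\}$; hence $T$ induces a bounded $\mathfrak{A}$-module map $\overline{T}:\mathcal{A}/J\to(\mathcal{A}/J)^{*}$ with $\langle\overline{T}(a+J),b+J\rangle=\langle T(a),b\rangle$ and $\|\overline{T}\|\le\|T\|$. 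Next I would note that, on bounded subsets of $\mathcal{A}^{**}$, the topology $\sigma(\mathcal{A}^{**},J^{\perp})$ is precisely the pull-back, along the second transpose $q^{**}:\mathcal{A}^{**}\to(\mathcal{A}/J)^{**}=\mathcal{A}^{**}/J^{\perp\perp}$ of the quotient map $q:\mathcal{A}\to\mathcal{A}/J$, of the weak$^{*}$ topology of $(\mathcal{A}/J)^{**}$; indeed $J^{\perp}$ is exactly the set of weak$^{*}$-continuous functionals on $\mathcal{A}^{**}$ that factor through $q^{**}$. Thus for bounded nets $(a_j),(b_k)$ in $\mathcal{A}$ with $\hat{a}_j\stackrel{J^{\perp}}{\longrightarrow}F$ and $\hat{b}_k\stackrel{J^{\perp}}{\longrightarrow}G$, one gets $\widehat{a_j+J}\stackrel{w^{*}}{\longrightarrow}q^{**}(F)$ and $\widehat{b_k+J}\stackrel{w^{*}}{\longrightarrow}q^{**}(G)$ in $(\mathcal{A}/J)^{**}$, while $\langle T(a_j),b_k\rangle=\langle\overline{T}(a_j+J),b_k+J\rangle$. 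Applying the classical description of the second conjugate to $\overline{T}$ (Goldstine, plus passage to a subnet to make the iterated limit exist) gives
\[
\lim_j\lim_k\langle T(a_j),b_k\rangle \;=\; \big\langle q^{**}(F),\,\overline{T}^{*}\big(q^{**}(G)\big)\big\rangle \;=\; \big\langle \overline{T}^{**}\big(q^{**}(F)\big),\,q^{**}(G)\big\rangle,
\]
which in particular is independent of the chosen nets; undoing the identification, this equals $\langle T^{**}(F),G\rangle=\langle\Lambda(T)F,G\rangle$, which is (a).

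For (b) I would dualize: under the identifications $\mathcal{L}_{\mathfrak{A}}(\mathcal{A},\mathcal{A}^{*})\cong(\mathcal{A}\widehat{\otimes}_{\mathfrak{A}}\mathcal{A})^{*}$ and $\mathcal{L}_{\mathfrak{A}}(\mathcal{A}^{**},\mathcal{A}^{***})\cong(\mathcal{A}^{**}\widehat{\otimes}_{\mathfrak{A}}\mathcal{A}^{**})^{*}$, the map $\Lambda$ is characterized by $\langle\Lambda(T),\hat{a}\otimes\hat{b}\rangle=\langle T,a\otimes b\rangle$ for $a,b\in\mathcal{A}$. Since the canonical map $\mathcal{A}\widehat{\otimes}_{\mathfrak{A}}\mathcal{A}\to\mathcal{A}^{**}\widehat{\otimes}_{\mathfrak{A}}\mathcal{A}^{**}$, $a\otimes b\mapsto\hat{a}\otimes\hat{b}$, is an $\mathcal{A}$-$\mathfrak{A}$-module homomorphism and the module actions on the two $\mathcal{L}$-spaces are the ones dual to the module structures of $\mathcal{A}\widehat{\otimes}_{\mathfrak{A}}\mathcal{A}$ and $\mathcal{A}^{**}\widehat{\otimes}_{\mathfrak{A}}\mathcal{A}^{**}$, one verifies $\Lambda(a\cdot T)=\hat{a}\cdot\Lambda(T)$, $\Lambda(\alpha\cdot T)=\alpha\cdot\Lambda(T)$ and their right-hand analogues, first by pairing both sides against an elementary tensor $\hat{c}\otimes\hat{d}$ of canonical images — where they reduce to the defining relations for the actions — and then extending to all of $\mathcal{A}^{**}\widehat{\otimes}_{\mathfrak{A}}\mathcal{A}^{**}$ by separate weak$^{*}$-continuity in each variable, which is visible from $\langle\Lambda(T)F,G\rangle=\langle F,T^{*}(G)\rangle$ and its mirror image. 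This step is routine, if a little tedious.

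The main obstacle is the well-definedness in part (a): the topology $\sigma(\mathcal{A}^{**},J^{\perp})$ is not Hausdorff, so the ``limit'' $F$ of a net $\hat{a}_j$ is pinned down only modulo $J^{\perp\perp}$, and one must be certain that $\lim_j\lim_k\langle T(a_j),b_k\rangle$ — indeed even the existence of the inner limit — does not depend on the approximating nets. This is exactly where the fact that $T$ factors through $\mathcal{A}/J$ in both slots is indispensable; once the problem has been transported to $\overline{T}$ on $\mathcal{A}/J$, everything reduces to the standard behaviour of second conjugates together with the module bookkeeping.
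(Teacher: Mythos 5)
Your construction is the same as the paper's: $\Lambda(T)=T^{**}$, with the lemma reduced to checking the module-homomorphism identities, which the paper carries out by the same direct computation with iterated limits that you sketch in (b). Where you genuinely go beyond the paper is part (a): the paper's proof says nothing about why the iterated limit exists and depends only on $F$ and $G$ when the nets converge merely in $\sigma(\mathcal A^{**},J^{\perp})$ rather than weak$^*$ (the paragraph preceding the lemma defines $T^{**}$ via weak$^*$-convergent nets, and the lemma silently replaces this by the coarser topology). Your diagnosis is correct and the issue is real: if $(m_j)$ is any bounded net in $J$, then $\widehat{a+m_j}\stackrel{J^{\perp}}{\longrightarrow}\hat a$, so for a general $T\in{\mathcal L}_{\mathfrak A}(\mathcal A,\mathcal A^*)$ with $T(J)\neq\{0\}$ the iterated limit does depend on the approximating net, and the displayed formula cannot hold as stated for all $T$. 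Your fix --- restricting to $T$ in the range of $\widetilde{\omega}^*$, where $T$ kills $J$ in both slots and hence factors through $\mathcal A/J$, then applying Goldstine to $\overline T$ --- is the right one, and it is exactly what the paper needs: the lemma is invoked only once, in Theorem \ref{mflat}, to compute $\Lambda\circ\widetilde{\omega}^*$.

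Two caveats. First, as written your argument establishes the lemma only for $T\in\widetilde{\omega}^*\bigl((\mathcal A/J)^*\bigr)$, which is strictly weaker than the statement; you should say explicitly that you are proving (and that one can only prove) this restricted version. Second, in (b) the claim of separate weak$^*$-continuity of $\langle\Lambda(T)F,G\rangle$ in each variable is not quite right: $\langle T^{**}(F),G\rangle=\langle F,T^*(G)\rangle$ is weak$^*$-continuous in $F$ for fixed $G$, but in $G$ only when $F\in\hat{\mathcal A}$ --- the usual first-Arens-product asymmetry. The extension from elementary tensors of canonical images is therefore more delicate than ``routine''; the paper's direct verification of $\Lambda(\alpha\cdot T)=\alpha\cdot\Lambda(T)$ and $\Lambda(T\cdot\alpha)=\Lambda(T)\cdot\alpha$ straight from the iterated-limit formula (using $\hat b_k\cdot\alpha\stackrel{J^{\perp}}{\longrightarrow}G\cdot\alpha$) avoids this and is the cleaner route for (b).
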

\begin{proof} We only need to check that $\Lambda $
is a $\mathcal A$-$\mathfrak A$-module homomorphism. It is easy to see
that for $f \in J^{\perp} , \alpha\in {\mathfrak A}$ we have
$f\cdot\alpha,\alpha\cdot f \in J^{\perp}$. If $\alpha \in
\mathfrak A$ then $ \hat b_k\cdot\alpha
\stackrel{J^{\perp}}{\longrightarrow}G\cdot\alpha $, hence
\begin{align*}
\langle\Lambda (\alpha \cdot T)F,G\rangle &=\lim_j\lim_k \langle(\alpha\cdot T)(a_j),b_k\rangle \\
&=\lim_j\lim_k \langle\alpha\cdot T(a_j),b_k\rangle\\
&=\lim_j\lim_k \langle T(a_j),b_k\cdot \alpha\rangle,
\end{align*}
and
\begin{align*}
\langle[\alpha\cdot \Lambda (T)]F,G\rangle &=\langle \Lambda (T)F,G\cdot \alpha\rangle=\lim_j\lim_k \langle T(a_j),b_k\cdot \alpha\rangle.
\end{align*}
Similarly, $\Lambda (T\cdot\alpha)=\Lambda (T)\cdot\alpha$ for all $T\in {\mathcal L}_{\mathfrak A}({\mathcal A},{\mathcal A^*})$ and $\alpha\in \mathfrak A$.
\end{proof}

Consider the map $\lambda : \mathcal A^{**}/N\longrightarrow
\mathcal A^{**}/J^{\perp \perp}; F+N \mapsto F+ J^{\perp \perp}$. It follows from the proof of \cite[Theorem 3.4]{abeh} that $ N \subseteq J^{\perp \perp}$, and thus $\lambda$ is well
defined. Also, it is easy to see that $\lambda$ is a bounded $\mathcal
A$-$\mathfrak A$-module homomorphism.

The following result is the module version of the classical case
which is proved in \cite[Theorem 2.2]{mos}.

\begin{thm}\label{mflat}
{\it If  $\mathcal A^{**}$
is module biflat, then so is $\mathcal A$.}
 \end{thm}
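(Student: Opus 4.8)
The plan is to run the same ``averaging'' argument that drove the proof of Theorem \ref{2.2}, but now with $\B=\A$ sitting inside $\A^{**}$ via the canonical embedding $\hat{\ }:\A\to\A^{**}$. The starting data is a bounded left inverse for $\widehat{\omega}^*$, equivalently (by Theorem \ref{2.1} applied to $\A^{**}$) an $\A^{**}/N$-$\mathfrak A$-module homomorphism $\rho:\A^{**}/N\to(\A^{**}\widehat\otimes_{\mathfrak A}\A^{**})^{**}$ with $\widehat{\omega}^{**}\circ\rho$ the canonical embedding of $\A^{**}/N$ into its bidual modulo $N^{\perp\perp}$. First I would use the maps already set up in the excerpt: the inclusion $i\otimes_{\mathfrak A}i:(\A\widehat\otimes_{\mathfrak A}\A)\to(\A^{**}\widehat\otimes_{\mathfrak A}\A^{**})$ induced by $\hat{\ }$, the quotient map $\lambda:\A^{**}/N\to\A^{**}/J^{\perp\perp}$ (well defined since $N\subseteq J^{\perp\perp}$, as noted from \cite[Theorem 3.4]{abeh}), and the module homomorphism $\Lambda$ of the preceding Lemma relating ${\mathcal L}_{\mathfrak A}(\A,\A^*)$ to ${\mathcal L}_{\mathfrak A}(\A^{**},\A^{***})$. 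Composing $\rho$ with $\hat{\ }:\A/J\to\A^{**}/J^{\perp\perp}$, pulling back along $\lambda$ (or rather choosing a lift), and then transporting along $(i\otimes_{\mathfrak A}i)^{**}$ should produce a candidate map $\A/J\to((\A\widehat\otimes\A)/I)^{**}$.

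The technical heart is the same as in Theorem \ref{2.2}: the composite one obtains need not land correctly, so one regularizes by multiplying against a bounded approximate identity. Here I would want a bounded approximate identity for $\A/J\cong\ell^1(G_S)$-type object; since $\A/J$ in the relevant examples has an identity (e.g. $\ell^1(G_S)$), or more generally one assumes the existence of a bounded approximate identity as needed, the Cohen factorization argument goes through verbatim. Concretely: set $\widetilde\rho$ to be the transported map, let $(e_j+J)$ be the bounded approximate identity of $\A/J$, and define $\overline\rho(a+J)=\text{weak}^*\text{-}\lim_j\widetilde\rho(e_j+J)\cdot(a+J)$; by the module action this is an $\A/J$-$\mathfrak A$-module homomorphism, and the key computation
\begin{align*}
\widetilde\omega^{**}(\overline\rho(a+J))&=\text{weak}^*\text{-}\lim_j\widetilde\omega^{**}(\widetilde\rho(e_j+J)\cdot(a+J))\\
&=\text{weak}^*\text{-}\lim_j\bigl(\widetilde\omega_{\A^{**}}^{**}\widetilde\rho(e_j+J)\bigr)\cdot(a+J)\\
&=\lim_j(e_j+J)\cdot(a+J)=a+J
\end{align*}
shows $\widetilde\omega^{**}\circ\overline\rho$ is the canonical embedding of $\A/J$ into $\A^{**}/J^{\perp\perp}$. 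Then Theorem \ref{2.1}(ii)$\Rightarrow$(i) gives module biflatness of $\A$.

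The step I expect to be the main obstacle is the compatibility of the ideals and quotients at the two levels: one must verify that $i\otimes_{\mathfrak A}i$ genuinely carries $I$ into $M$ (so that it descends to $(\A\widehat\otimes\A)/I\to(\A^{**}\widehat\otimes\A^{**})/M$), that $J\subseteq i^{-1}(N)$ compatibly, and that the various duality pairings $\langle\widehat\omega^*(\phi)F,G\rangle=\phi(F\square G+N)$ restrict correctly on $\hat{\A}$ so that $(i\otimes_{\mathfrak A}i)^*\circ\widehat\omega^*|_{\hat{\A}/J}=\widetilde\omega^*$ after the obvious identifications — this is the analogue of the ``$(i\otimes_{\mathfrak A}i)^*\circ\widetilde\omega^*|_{\A_{\gamma_0}}=\widetilde\omega_{\A_{\gamma_0}}^*$'' claim used earlier. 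Once these identifications are in place (and they follow from the definitions of $\square$, $M$, $N$ together with $N\subseteq J^{\perp\perp}$), the argument is a routine adaptation of Theorem \ref{2.2} and of the classical result \cite[Theorem 2.2]{mos}, with $\Lambda$ and $\lambda$ playing the role of the bridging module homomorphisms.
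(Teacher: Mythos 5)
Your proposal has a structural flaw that I do not think can be repaired in the form you describe: the direction of the induced maps is wrong. In Theorem \ref{2.2} the ``known'' biflat object $\mathcal B$ is an ideal sitting \emph{inside} the target algebra $\mathcal A$, so $(i\otimes_{\mathfrak A}i)^{**}$ pushes $\rho$'s values forward from $((\mathcal B\widehat\otimes\mathcal B)/I_{\mathcal B})^{**}$ into $((\mathcal A\widehat\otimes\mathcal A)/I)^{**}$. Here the containment is reversed: the known object $\mathcal A^{**}$ is the \emph{big} one, and the canonical $i\otimes_{\mathfrak A}i$ induced by $\hat{\ }:\mathcal A\to\mathcal A^{**}$ gives $(i\otimes_{\mathfrak A}i)^{**}:((\mathcal A\widehat\otimes\mathcal A)/I)^{**}\to((\mathcal A^{**}\widehat\otimes\mathcal A^{**})/M)^{**}$, i.e.\ it maps \emph{into} the space where $\rho$ takes its values, not out of it. There is no canonical map $((\mathcal A^{**}\widehat\otimes\mathcal A^{**})/M)^{**}\to((\mathcal A\widehat\otimes\mathcal A)/I)^{**}$ with which to ``transport'' $\rho$ downward, just as there is no canonical lift of the quotient $\lambda:\mathcal A^{**}/N\to\mathcal A^{**}/J^{\perp\perp}$. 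Your subsequent regularization also imports a bounded approximate identity for $\mathcal A/J$ and Cohen factorization, which are not among the hypotheses of the theorem and are not needed.

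The paper avoids all of this by staying entirely on the dual side, where every arrow points the right way. From module biflatness of $\mathcal A^{**}$ one takes the left inverse $\widehat\theta$ of $\widehat\omega^*$ directly; the bridging map is the $\Lambda$ of the preceding Lemma (essentially $T\mapsto T^{**}$), which carries $((\mathcal A\widehat\otimes_{\mathfrak A}\mathcal A))^*={\mathcal L}_{\mathfrak A}(\mathcal A,\mathcal A^*)$ \emph{up} to ${\mathcal L}_{\mathfrak A}(\mathcal A^{**},\mathcal A^{***})$, and the restriction $i^*:(\mathcal A^{**}/N)^*\to(\mathcal A/J)^*$ carries functionals \emph{down}. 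The whole content is the iterated-limit identity
\begin{equation*}
\Lambda\circ\widetilde\omega^*=\widehat\omega^*\circ\lambda^*\circ j,
\end{equation*}
after which $\Delta=i^*\circ\widehat\theta\circ\Lambda$ is immediately a left inverse for $\widetilde\omega^*$ because $\lambda\circ i$ is the canonical embedding $\mathcal A/J\to\mathcal A^{**}/J^{\perp\perp}$. You cite $\Lambda$ but never actually use it; it, together with this intertwining identity, is the missing idea. If you want to salvage your outline, replace the formulation (ii) of Theorem \ref{2.1} by the left-inverse formulation (i) and build $\Delta$ as above.
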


\begin{proof}
 Suppose that $
\widehat{\omega}^* $ has a left inverse $\mathcal A/J$-$\mathfrak
A$-module homomorphism $ \widehat \theta $, then $ \widehat \theta
\circ \widehat{\omega}^* =id_{({\mathcal A^{**}}/N)^*}$. Assume
that $ j:({\mathcal A}/{J})^* \longrightarrow {({\mathcal
A^{**}}/{J^{\perp \perp }})^*}$ is the canonical embedding, and
$\widetilde{\omega}$, $\lambda$ and $\Lambda $ are as the above.
Consider the map $i:{\mathcal A}/{J} \longrightarrow {\mathcal
A^{**}}/N$; ($a+J\mapsto a+N$). Obviously $i$ is well defined.
Let us show that $\Lambda \circ \widetilde{\omega}^* =
\widehat{\omega}^* \circ\lambda^*\circ j $. Take $\varphi \in
({\mathcal A}/{J})^* $, and $F,G\in{\mathcal A}^{**}, \hat a_l
\stackrel{J^{\perp}}{\longrightarrow}F$ and $ \hat b_k
\stackrel{J^{\perp}}{\longrightarrow}G$, then

\begin{align*}
\langle[(\Lambda \circ \widetilde{\omega}^*)(\varphi
)](F),G\rangle &=\lim_l\lim_k
\langle(\widetilde{\omega}^*(\varphi))(a_l),b_k\rangle \\
&=\lim_l\lim_k \varphi(a_lb_k+J)
\end{align*}
and
\begin{align*}
\langle[(\widehat{\omega}^*\circ\lambda^* \circ j)(\varphi
)](F),G\rangle &=\langle \lambda^*(j(\varphi)),F\square G+ N\rangle\\
&=\langle j(\varphi)\circ\lambda,F\square G+ N\rangle\\
&=\langle j(\varphi),F\square G+ J^{\perp \perp}\rangle\\
&=\lim_l\lim_k\langle a_lb_k+J,\varphi\rangle \\
&=\lim_l\lim_k \varphi(a_lb_k+J).
\end{align*}
Put $\Delta=i^* \circ  \widehat \theta \circ \Lambda $. Then it is easy to check that $\Delta$ is a left inverse for
$\widetilde{\omega}^*$.
\end{proof}

\begin{lem} \label{bail}{\it If  $
{\mathcal A^{**}}/{J^{\perp \perp }}= ({\mathcal A}/{J})^{**}$
has a bounded approximate identity, then so does
 ${\mathcal A}/{J}$.}\end{lem}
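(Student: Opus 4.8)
The plan is to reduce the statement to the classical principle that a Banach algebra whose second dual (with the first Arens product) has a bounded approximate identity itself has one, applied to $B:=\mathcal A/J$. Since $\mathcal A^{**}/J^{\perp\perp}\cong(\mathcal A/J)^{**}$ carries its first Arens product as the quotient of the product on $\mathcal A^{**}$ (legitimate because $J^{\perp\perp}$ is a weak$^*$-closed two-sided ideal of $(\mathcal A^{**},\square)$), the hypothesis says exactly that $B^{**}$ has a bounded approximate identity $(E_\gamma)_\gamma$, say with $\sup_\gamma\|E_\gamma\|=M<\infty$. By the Banach--Alaoglu theorem, $(E_\gamma)$ has a subnet converging in the weak$^*$ topology of $B^{**}$ to some $E$ with $\|E\|\le M$, and along this subnet we still have $E_\gamma\square\hat b\to\hat b$ and $\hat b\square E_\gamma\to\hat b$ in norm for every $b\in B$, since $(E_\gamma)$ is a two-sided bounded approximate identity.

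Next I would show that $E$ is a mixed identity, i.e.\ $E\square\hat b=\hat b\square E=\hat b$ for every $b\in B$. For the first Arens product the map $Y\mapsto Y\square\hat b$ is weak$^*$--weak$^*$ continuous for every $b$, and $Y\mapsto\hat b\square Y$ is weak$^*$--weak$^*$ continuous because $\hat b$ lies in the topological centre of $(B^{**},\square)$; passing to the limit along the convergent subnet in the two norm convergences above yields $E\square\hat b=\hat b$ and $\hat b\square E=\hat b$. Then, invoking Goldstine's theorem, I would choose a net $(u_i)\subset B$ with $\|u_i\|\le M$ and $\hat u_i\to E$ weak$^*$; applying the two weak$^*$-continuous maps gives $\widehat{u_i b}=\hat u_i\square\hat b\to E\square\hat b=\hat b$ and $\widehat{b u_i}=\hat b\square\hat u_i\to\hat b\square E=\hat b$ weak$^*$, which means $u_i b\to b$ and $b u_i\to b$ in the weak topology of $B$ for every $b$. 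Thus $(u_i)$ is a bounded two-sided approximate identity for $B$ in the weak topology.

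Finally I would upgrade this to a norm approximate identity by the standard convexity argument: given $b_1,\dots,b_n\in B$ and $\eps>0$, the origin of $B^{2n}$ lies in the weak closure of $\{(u_i b_1-b_1,\,b_1u_i-b_1,\,\dots,\,u_i b_n-b_n,\,b_nu_i-b_n)\}_i$, hence in the weak closure of the convex hull of that set, which by Mazur's theorem coincides with its norm closure; so a suitable convex combination $u=\sum_k\lambda_k u_{i_k}$ satisfies $\|u\|\le M$ and $\|u b_j-b_j\|<\eps$, $\|b_j u-b_j\|<\eps$ for all $j$, giving the desired bounded approximate identity of $\mathcal A/J$ indexed by pairs (finite subset, $\eps$). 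The Arens-product bookkeeping is routine; the one point needing care — and the reason one works through the mixed identity $E$ and the Goldstine net $(u_i)$ rather than manipulating $(E_\gamma)$ directly — is that left multiplication $Y\mapsto F\square Y$ is \emph{not} weak$^*$ continuous for a general $F\in B^{**}$, so the continuity arguments must only ever be run along elements of the canonical image $\hat B$, where it is available.
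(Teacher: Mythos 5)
Your proof is correct. The paper itself gives no argument at all here: it simply cites \cite[Proposition 28.7]{b} (bounded left and right approximate identities combine into a bounded two-sided one) and \cite[Lemma 1.1]{glw} (a one-sided identity in $(B^{**},\square)$ yields a bounded one-sided approximate identity in $B$), applied to $B=\mathcal A/J$. What you have written is a complete, self-contained unpacking of essentially that same classical circle of ideas: weak$^*$ cluster point $E$ of the approximate identity of $B^{**}$, verification that $E$ is a two-sided identity for elements of $\hat B$ (the only place the identity property is actually needed), Goldstine to pull $E$ back to a bounded net $(u_i)$ in $B$, which is then a weak approximate identity, and finally the Mazur convexity upgrade to a norm approximate identity with the same bound. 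All the continuity claims you invoke are the correct ones --- $Y\mapsto Y\square G$ is weak$^*$-continuous for every $G$, while $Y\mapsto F\square Y$ is weak$^*$-continuous only for $F$ in the (left) topological centre, which contains $\hat B$ --- and your closing remark correctly identifies this asymmetry as the one point where a careless argument would break. The only cosmetic quibble is terminological: what you establish for $E$ is that it is a two-sided identity on $\hat B$, which is all you use, rather than a ``mixed identity'' in the usual sense (right identity for $\square$ and left identity for the second Arens product on all of $B^{**}$). Compared with the paper, your route costs a page of standard functional analysis but buys a verifiable argument that does not depend on locating two external references, and it keeps explicit control of the bound $M$ throughout.
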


\begin{proof} The result immediately follows from
\cite[Proposition 28.7]{b} and \cite[Lemma 1.1]{glw}.\end{proof}

The following theorem shows the relation between the concepts of module amenability and of module biflatness on Banach algebras. Since its proof is similar to the proof of \cite[Theorem 2.1]{bod}, is omitted. We only remember that the module amenability of a Banach algebra $\mathcal A$ implies that $\mathcal A/J$ has a bounded approximate identity \cite[Proposition 2.2]{am1}. We employ this result to show that there are semigroup algebras such that their second dual are module biflat but which are not themselves biflat.
\begin{thm} \label{biflat}
 Let $\mathcal A$ be a Banach ${\mathfrak
A}$-bimodule and let $\mathcal A/J$ be a commutative Banach
${\mathfrak A}$-bimodule. Then $\mathcal A $ is module
amenable if and only if $\mathcal A$ is module biflat and $\mathcal A/J$ has a bounded approximate identity.
\end{thm}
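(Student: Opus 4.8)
The plan is to prove the two implications separately; the statement is the module analogue of the classical theorem that a Banach algebra is amenable if and only if it is biflat and has a bounded approximate identity, and the argument follows \cite[Theorem 2.1]{bod} essentially verbatim, so I only describe the skeleton. Throughout we use the standing conventions recorded before Definition~\ref{def1} (under which $(\A\widehat{\otimes}\A)/I$ carries an $\A/J$-bimodule structure) and the commutativity of $\A/J$.

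\emph{Necessity.} Suppose $\A$ is module amenable. That $\A/J$ has a bounded approximate identity is \cite[Proposition 2.2]{am1}. To obtain module biflatness, consider the short exact sequence of $\A/J$-$\mathfrak{A}$-module homomorphisms
$$0\longrightarrow\ker\widetilde{\omega}\stackrel{\imath}{\longrightarrow}(\A\widehat{\otimes}\A)/I\stackrel{\widetilde{\omega}}{\longrightarrow}\A/J\longrightarrow0,$$
whose first dual
$$0\longrightarrow(\A/J)^{*}\stackrel{\widetilde{\omega}^{*}}{\longrightarrow}\big((\A\widehat{\otimes}\A)/I\big)^{*}\stackrel{\imath^{*}}{\longrightarrow}(\ker\widetilde{\omega})^{*}\longrightarrow0$$
is admissible by \cite[Lemma 2.2]{bod} (as used in the proof of Theorem~\ref{2.2}). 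Choosing an $\mathfrak{A}$-module bounded right inverse $\sigma$ of $\imath^{*}$, the map $a\mapsto\big(\eta\mapsto a\cdot\sigma(\eta)-\sigma(a\cdot\eta)\big)$ is an $\mathfrak{A}$-module derivation of $\A$ into the dual commutative $\A/J$-$\mathfrak{A}$-module $\mathcal{L}_{\mathfrak{A}}\big((\ker\widetilde{\omega})^{*},(\A/J)^{*}\big)\cong\big((\ker\widetilde{\omega})^{*}\widehat{\otimes}_{\mathfrak{A}}(\A/J)\big)^{*}$ (it is here that the commutativity of $\A/J$ makes the coefficient module commutative). Module amenability makes this derivation inner, which upgrades $\sigma$ to an $\A/J$-$\mathfrak{A}$-module right inverse of $\imath^{*}$, equivalently provides an $\A/J$-$\mathfrak{A}$-module bounded left inverse of $\widetilde{\omega}^{*}$; thus $\A$ is module biflat.

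\emph{Sufficiency.} Suppose $\A$ is module biflat, fix an $\A/J$-$\mathfrak{A}$-module bounded left inverse $\pi$ of $\widetilde{\omega}^{*}$, and let $(e_{\lambda}+J)$ be a bounded approximate identity for the commutative algebra $\A/J$. By weak$^{*}$ compactness of bounded subsets of the bidual, a subnet of $\big(\pi^{*}(\widehat{e_{\lambda}+J})\big)$ converges weak$^{*}$ to some $M\in\big((\A\widehat{\otimes}\A)/I\big)^{**}$. From the facts that $\pi^{*}|_{\A/J}$ is an $\A/J$-$\mathfrak{A}$-module homomorphism with $\widetilde{\omega}^{**}\circ\pi^{*}|_{\A/J}$ equal to the canonical embedding $\A/J\hookrightarrow\A^{**}/J^{\bot\bot}$ (Theorem~\ref{2.1}), together with the approximate-identity property, one checks that $M$ is a module virtual diagonal: $(a+J)\cdot M=M\cdot(a+J)$ for every $a\in\A$, and $\widetilde{\omega}^{**}(M)$ acts as the identity on the image of $\A/J$ in $\A^{**}/J^{\bot\bot}$. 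Now let $X$ be a commutative Banach $\A$-$\mathfrak{A}$-module and $D\colon\A\to X^{*}$ an $\mathfrak{A}$-module derivation. The compatibility identities force $J$ to annihilate $X$, hence $X^{*}$, from both sides, and, evaluated on the generators $(a\cdot\alpha)b-a(\alpha\cdot b)$ of $J$, force $D(J)=0$; thus $D$ factors through $\A/J$, and $(a,b)\mapsto a\cdot D(b)$ induces a bounded $\A/J$-$\mathfrak{A}$-module map $\overline{\psi}\colon(\A\widehat{\otimes}\A)/I\to X^{*}$ annihilating $I$. Setting $\xi:=\kappa_{X}^{*}\big(\overline{\psi}^{**}(M)\big)\in X^{*}$, where $\kappa_{X}\colon X\to X^{**}$ is the canonical embedding, the standard virtual-diagonal computation, using the two displayed properties of $M$, yields $D(a)=a\cdot\xi-\xi\cdot a$ for all $a\in\A$. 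Hence every such derivation is inner and $\A$ is module amenable.

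\emph{Where the difficulty lies.} The computational steps — checking that the weak$^{*}$ cluster point has the two virtual-diagonal properties, and that these close the identity $D(a)=a\cdot\xi-\xi\cdot a$ — are faithful transcriptions of the classical arguments and offer no real resistance. The genuinely delicate point is the persistent bookkeeping relative to the ideal $J$: that $\widetilde{\omega}^{**}(M)$ is only an identity \emph{modulo} $J^{\bot\bot}$, that the module actions on $(\A\widehat{\otimes}\A)/I$ descend correctly to $\A/J$, and that $D$ may legitimately be treated as a derivation of $\A/J$; it is precisely in controlling these that the hypothesis ``$\A/J$ commutative'' and the standing conventions on the $\mathfrak{A}$-actions are used, exactly as in \cite[Theorem 2.1]{bod}.
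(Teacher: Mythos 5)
Your proposal is correct and takes essentially the same route as the paper: the authors omit the proof of this theorem precisely because it is the argument of \cite[Theorem 2.1]{bod} (together with \cite[Proposition 2.2]{am1} for the bounded approximate identity), and your skeleton is a faithful expansion of exactly that argument --- splitting the dualized admissible sequence by making an inner derivation correction for necessity, and building a module virtual diagonal from $\pi^{*}$ and the approximate identity for sufficiency.
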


In the next result we give a generalization of \cite[Proposition
3.6]{am2} under a weaker condition. In fact if $\mathcal A$ is a
commutative $\mathfrak A$-bimodule, then $\mathcal A/J$ and
$\mathcal A^{**}/N$ are commutative $\mathfrak A$-bimodules.
Therefore $\mathcal A^{**}/J^{\perp \perp}$ is also a commutative
$\mathfrak A$-bimodule (see also \cite[Theorem 2.2]{amb}).

\begin{thm}{\it Let $\mathcal A/J$ and ${\mathcal A}^{**}/N$ be commutative Banach ${\mathfrak A}$-bimodules.
If $\mathcal A^{**}$ is module amenable, then so is $\mathcal A$.}
\end{thm}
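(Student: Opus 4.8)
The plan is to deduce module amenability of $\mathcal A$ from that of $\mathcal A^{**}$ by combining Theorem \ref{mflat} with Theorem \ref{biflat}. Since $\mathcal A^{**}$ is module amenable and $\mathcal A^{**}/N$ is a commutative $\mathfrak A$-bimodule, Theorem \ref{biflat} applied to the Banach algebra $\mathcal A^{**}$ tells us that $\mathcal A^{**}$ is module biflat and that $\mathcal A^{**}/N$ has a bounded approximate identity. By Theorem \ref{mflat}, module biflatness of $\mathcal A^{**}$ passes down to module biflatness of $\mathcal A$. So what remains is to produce a bounded approximate identity for $\mathcal A/J$; once that is in hand, a second application of Theorem \ref{biflat}, this time to $\mathcal A$ (using that $\mathcal A/J$ is a commutative $\mathfrak A$-bimodule by hypothesis), yields module amenability of $\mathcal A$.

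First I would pin down the approximate-identity transfer. From module amenability of $\mathcal A^{**}$, \cite[Proposition 2.2]{am1} gives that $\mathcal A^{**}/N$ has a bounded approximate identity. Using the bounded $\mathcal A$-$\mathfrak A$-module homomorphism $\lambda:\mathcal A^{**}/N\longrightarrow\mathcal A^{**}/J^{\perp\perp}$ introduced before Theorem \ref{mflat}, which is moreover surjective (every coset $F+J^{\perp\perp}$ is $\lambda(F+N)$ since $N\subseteq J^{\perp\perp}$), the image of a bounded approximate identity for $\mathcal A^{**}/N$ under $\lambda$ is a bounded approximate identity for $\mathcal A^{**}/J^{\perp\perp}$. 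Now invoke the identification $\mathcal A^{**}/J^{\perp\perp}=(\mathcal A/J)^{**}$ together with Lemma \ref{bail}: a bounded approximate identity for $(\mathcal A/J)^{**}$ forces one on $\mathcal A/J$. This is the only slightly delicate link, and the main point to be careful about is that $\lambda$ is indeed an algebra homomorphism onto $(\mathcal A/J)^{**}$ carrying approximate identities to approximate identities; but this is routine once one recalls that $\mathcal A^{**}/J^{\perp\perp}$ is the Arens-second-dual of $\mathcal A/J$ and $\lambda$ is the canonical quotient of $\mathcal A^{**}/N$ onto it.

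Assembling the pieces: $\mathcal A$ is module biflat (from Theorem \ref{mflat}), $\mathcal A/J$ has a bounded approximate identity (from the transfer above), and $\mathcal A/J$ is a commutative Banach $\mathfrak A$-bimodule (hypothesis). By Theorem \ref{biflat}, $\mathcal A$ is module amenable, completing the proof. This recovers \cite[Proposition 3.6]{am2}, where $\mathcal A$ itself was assumed commutative; here we only need $\mathcal A/J$ and $\mathcal A^{**}/N$ to be commutative $\mathfrak A$-bimodules, which is strictly weaker. I expect no genuine obstacle beyond the bookkeeping in the approximate-identity transfer; everything else is a direct concatenation of Theorems \ref{mflat} and \ref{biflat} and Lemma \ref{bail}.
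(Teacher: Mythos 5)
Your proposal is correct and follows essentially the same route as the paper: apply Theorem \ref{biflat} to $\mathcal A^{**}$ to get module biflatness and a bounded approximate identity for $\mathcal A^{**}/N$, push the latter through the surjection $\lambda$ to $\mathcal A^{**}/J^{\perp\perp}=(\mathcal A/J)^{**}$, use Lemma \ref{bail} to descend to $\mathcal A/J$, and combine with Theorem \ref{mflat} and a second application of Theorem \ref{biflat}. You merely spell out the approximate-identity transfer in more detail than the paper does.
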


\begin{proof} If $\mathcal
A^{**}$ is module amenable, then $\mathcal A^{**}$ is module
biflat and ${\mathcal A}^{**}/N$ has a bounded approximate
identity $\{E_j+N\}$ by Theorem \ref{biflat}. Since $\lambda$ is
surjective, $\{E_j+J^{\perp \perp}\}$ is a bounded approximate
identity for $\mathcal A^{**}/J^{\perp \perp}$. Now the result
follows from Theorem \ref{mflat}, Lemma \ref{bail} and Theorem \ref{biflat}.\end{proof}

\begin{cor}{\it Let $\mathcal A/J$ commutative Banach ${\mathfrak A}$-bimodule and $N$ be weak$^*$-closed in $\mathcal A^{**}$.
If $\mathcal A^{**}$ is module amenable, then so is $\mathcal A$.}
\end{cor}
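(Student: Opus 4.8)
The plan is to reduce this corollary to the preceding theorem by showing that weak$^*$-closedness of $N$ automatically forces $N = J^{\perp\perp}$, after which $\mathcal A^{**}/N$ is a commutative Banach $\mathfrak A$-bimodule for free and the theorem applies directly.

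First I would recall the inclusion $N \subseteq J^{\perp\perp}$, which was already extracted above from \cite[Theorem 3.4]{abeh} (it is what makes $\lambda$ well defined). For the reverse passage I would establish $\widehat{J} \subseteq N$. By construction $N$ is the closed ideal of $\mathcal A^{**}$ generated by the elements $(F\cdot\alpha)\square G - F\square(\alpha\cdot G)$ with $F,G\in\mathcal A^{**}$ and $\alpha\in\mathfrak A$. Specialising to $F=\widehat a$ and $G=\widehat b$ and using that the canonical embedding $\mathcal A\hookrightarrow\mathcal A^{**}$ is both an algebra homomorphism and an $\mathfrak A$-bimodule homomorphism (so that $\widehat a\,\square\,\widehat b=\widehat{ab}$ and $\widehat a\cdot\alpha=\widehat{a\cdot\alpha}$), one gets $(\widehat a\cdot\alpha)\square\widehat b - \widehat a\square(\alpha\cdot\widehat b)=\widehat{(a\cdot\alpha)b-a(\alpha\cdot b)}$. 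Thus $N$ contains the image of every generator of the closed ideal $J$ under the canonical embedding, and since that embedding is a continuous homomorphism it follows that $\widehat J\subseteq N$.

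Now the hypothesis enters. Since $J^{\perp\perp}$ is precisely the weak$^*$-closure of $\widehat J$ in $\mathcal A^{**}=(\mathcal A^*)^*$, from $\widehat J\subseteq N$ and the weak$^*$-closedness of $N$ we obtain $J^{\perp\perp}\subseteq N$, and combined with $N\subseteq J^{\perp\perp}$ this gives $N=J^{\perp\perp}$. Hence $\mathcal A^{**}/N=\mathcal A^{**}/J^{\perp\perp}\cong(\mathcal A/J)^{**}$. Since $\mathcal A/J$ is a commutative Banach $\mathfrak A$-bimodule, so is $(\mathcal A/J)^*$, and hence so is $(\mathcal A/J)^{**}$; therefore $\mathcal A^{**}/N$ is a commutative Banach $\mathfrak A$-bimodule. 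With $\mathcal A/J$ and $\mathcal A^{**}/N$ both commutative, the preceding theorem applies and yields that $\mathcal A$ is module amenable.

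The only delicate point is the inclusion $\widehat J\subseteq N$: one must check that the generating set of $N$ genuinely captures the images of the generators of $J$ and that forming the closed ideal generated commutes with the canonical embedding. Everything else is routine, using only annihilator/bipolar duality, the standing identification $\mathcal A^{**}/J^{\perp\perp}\cong(\mathcal A/J)^{**}$, the already-established containment $N\subseteq J^{\perp\perp}$, and the stability of commutativity of $\mathfrak A$-bimodules under passing to dual spaces.
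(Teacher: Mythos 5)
Your proposal is correct, and since the paper states this corollary without proof, your argument supplies exactly the reduction that the placement as a corollary of the preceding theorem suggests: $\widehat{J}\subseteq N\subseteq J^{\perp\perp}$ together with weak$^*$-closedness forces $N=J^{\perp\perp}$ by the bipolar theorem, so $\mathcal A^{**}/N\cong(\mathcal A/J)^{**}$ inherits commutativity from $\mathcal A/J$ and the theorem applies. The one step you rightly flag as delicate, namely $\widehat{J}\subseteq N$, does go through: the canonical embedding sends each generator $(a\cdot\alpha)b-a(\alpha\cdot b)$ of $J$ to a generator $(\widehat a\cdot\alpha)\square\widehat b-\widehat a\square(\alpha\cdot\widehat b)$ of $N$, and $N$ being a closed ideal of $\mathcal A^{**}$ absorbs the $\widehat{\mathcal A}$-multiples and norm limits needed to capture all of $\widehat{J}$.
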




Let $P$ be a partially ordered set. For $p\in P$, we set
$(p]=\{x:x\leq p\}$. Then $P$ is called
{\it locally finite} if $(p]$ is finite for all $p\in P$, and {\it
locally $C$-finite} for some constant $C>1$ if $|(p]|<C$ for all
$p\in P$. A partially ordered set $P$ which is locally $C$-finite,
for some constant $C$ is called {\it uniformly locally finite}. For example, the semigroup $S=(\mathbb{N}, \bullet)$ in which $m\bullet n=$min$\{m,n\}$ is locally finite but is not uniformly locally finite.

Let $S$ be an inverse semigroup with the set of idempotents $E$. Assume that $J$ is the closed ideal $J$ generated by the set $\{\delta_{set}-\delta_{st} :\, s,t \in S,  e \in E\}.$ By module actions defined in (\ref{e1}), we have
$$\delta_{se}-\delta_{s}\in J \Longleftrightarrow \delta_{set}-\delta_{st}\in J$$
for all $s,t \in S$ and $e \in E$. Therefore $\ell^{1}(S)/J\cong {\ell ^{1}}(S/\approx)$ is always a commutative $ \ell
^{1}(E)$-bimodule.

\begin{ex}
\emph{ Let $S=(\mathbb{N}, \bullet)$ be the semigroup of positive
integers with minimum operation, that is $m\bullet n=$min$\{m,n\}$,
then each element of $\mathbb{N}$ is an idempotent, and thus
$G_{\mathbb{N}}$ is the trivial group with one element.
Therefore ${\ell ^{1}}( \mathbb{N})^{**}$ is ${\ell ^{1}}(
E_{\mathbb{N}})$-bimodule amenable by \cite[Theorem 3.4]{am2}. Let us $\mathcal N$ be the closed ideal of $\ell^{1}(\mathbb{N})^{**}$ generated by $(F\cdot\alpha)\square G-F \square
(\alpha\cdot G)$, for $F,G\in\ell^{1}(\mathbb{N})^{**}$ and $\alpha\in
{{\ell ^{1}}(E_{\mathbb{N}})}$. Since ${\ell ^{1}}( \mathbb{N})/J$ is a commutative Banach ${\ell ^{1}}(E_{\mathbb{N}})$-bimodule, so is ${\ell ^{1}}( \mathbb{N})^{**}/\mathcal N$. Hence  ${\ell ^{1}}( \mathbb{N})^{**}$ is module biflat by Theorem \ref{biflat}.
On the other hand, $(\mathbb{N},\leq)$ is not uniformly locally finite. Thus $\ell^{1}(\mathbb{N})$ is not biflat, and so it is not biprojective
\cite{ram}. Therefore, by \cite[Corollary 3.11]{em}, $\ell^{1}(\mathbb{N})^{**}$ is not biflat.}
\end{ex}

\begin{ex} \emph{Let $\mathcal C$ be the
bicyclic inverse semigroup generated by $p$ and $q$, that is
$$\mathcal C=\{p^mq^n : m,n\geq 0 \},\hspace{0.2cm}(p^mq^n)^*=p^nq^m. $$}
\emph{The set of idempotents of $\mathcal C$ is $E_{\mathcal
C}=\{p^nq^n : n=0,1,...\}$ which is totally ordered with the
following order
$$p^nq^n \leq p^mq^m \Longleftrightarrow m \leq n.$$}
\emph{It is shown in \cite{am2} that ${\ell ^{1}}(\mathcal C)^{**}$ is not module amenable as an ${\ell
^{1}}(E_{\mathcal C})$-bimodule. Therefore it is not module biflat by Theorem \ref{biflat}. Note that ${\ell ^{1}}(\mathcal C)^{**}$ is not biflat by \cite[Corollary 3.11]{em}. However $(E_{\mathcal C},\leq)$ is
not uniformly locally finite, so $\ell ^{1}(\mathcal C)$ is
is not biprojective.}
\end{ex}

\begin{ex} \emph {Let $G$ be a group, and let $I$ be a non-empty set. Then for
$S=\mathcal{M}^0 (G, I)$, the {\it Brandt inverse semigroup}
corresponding to $G$ and the index set $I$, it is shown in
\cite{pou} that $G_S$ is the trivial group. Therefore
${\ell ^{1}}(S)^{**}$ is module amenable, and thus it is module biflat by Theorem \ref{biflat}.  It is obvious that $E_S$ is uniformly locally finite, indeed}
 \begin{equation*}
(s]=\{t\in E_S: t=ts\}= E_SS=
\begin{cases}
\{0,e\}\,\,\hspace{.7cm} \text{if $s\neq 0$}\\

\{0\},\,\,\hspace{0.9cm}\text{if $s=0$.}
\end{cases}
\end{equation*}
 \emph {Hence, $(E_S,\leq)$ is a uniformly locally finite semilattice. Now by \cite[Proposition 2.14]{ram}, it
follows that $(S,\leq)$ is uniformly locally finite. Also each maximal subgroup of $S$ at an idempotent is isomorphic to $G$, hence
${\ell ^{1}}(S)$ is not biflat if $G$ is not amenable \cite[Theorem 3.7]{ram}. In fact, for non-amenable locally compact group $G$, ${\ell ^{1}}(S)^{**}$ is not biflat \cite[Theorem 2.2]{mos}. Note that however, if $I$ is finite, then biflatness of ${\ell ^{1}}(S)^{**}$ is equivalet to the finitness of $G$ \cite[Corollary 3.12]{em}.}
\end{ex}

\section*{ACKNOWLEDGEMENT}

The authors sincerely thank the anonymous reviewers for their careful reading, constructive comments and fruitful suggestions to improve the quality of
the first draft.

\bigskip
\bigskip

{ \textit{\footnotesize Received: Month xx, 200x}}
\begin{flushright}
{\footnotesize\textit{Department of Mathematics} \\ \textit{Garmsar Branch} \\ \textit{Islamic Azad University} \\ \textit{Garmsar,
 Iran} \\ \textit{abasalt.bodaghi@gmail.com} }
\end{flushright}
\begin{flushright}\footnotesize
and
\end{flushright}
\begin{flushright}
\footnotesize {\textit{Young Researchers and Elite Club}}\\ {\textit{Ardabil Branch}}\\
{\textit{Islamic Azad University}} {\textit{Ardabil, Iran}}\\
{\textit{ jabbari\underline{ }al@yahoo.com, ali.jabbari@iauardabil.ac.ir}}\\
\end{flushright}

\begin{thebibliography}{99}
\bibitem{am1}M. Amini,
\emph{Module amenability for semigroup algebras}. Semigroup Forum. {\bf 69} (2004), 243--254.

\bibitem{amb} M. Amini and A. Bodaghi, {\it Module amenability and weak module
amenability for second dual of Banach algebras}. Chamchuri J. Math.
\textbf{2} (2010), 57--71.

\bibitem{am2}M. Amini, A. Bodaghi and D. Ebrahimi Bagha,
\emph{Module amenability of the second dual and module topological center of semigroup algebras}. Semigroup Forum. \textbf{80} (2010), 302-312.

\bibitem{abeh}M. Amini, A. Bodaghi, M. Ettefagh and K. Haghnejad Azar,
\emph{Arens regularity and module Arens regularity of module actions}. U. P. B. Sci. Bull. Series A. {\bf 74}, Iss. 2, (2012), 75--86.

\bibitem{bod}A. Bodaghi and M. Amini,
\emph{Module biprojective and module biflat Banach algebras}. U. P. B. Sci. Bull. Series A. {\bf 75}, Iss. 3, (2013), 25--36.
\bibitem{bab}  A. Bodaghi, M. Amini and  R. Babaee, {\it  Module derivations into iterated duals of Banach algebras}. Proc. Rom. Aca., Series A.
\textbf{12}, No. 4 (2011), 277--284.

\bibitem{b}F. F. Bonsall and J. Duncan,
\emph{Complete normed algebras}. Springer-Verlag, Berlin, 1973.

\bibitem{dls}  H. G. Dales, A. T.-M. Lau and D. Strauss, {\it Banach algebras on semigroups and on their compactifications}. Memoirs AMS, Volume {\bf 205}
(2010), 1--165.

\bibitem{day}  M. M. Day, {\it Semigroups and amenability in Semigroups}.  (Proc. Sympos., Wayne State Univ., Detroit, Mich., 1968), Academic Press, New York, 1969,  5--53.

\bibitem{dun}  J. Duncan and I. Namioka, {\it Amenability of inverse semigroups and their semigroup algebras}.
Proc. Roy. Soc. Edinburgh. \textbf{80A} (1988), 309--321.

\bibitem{em}M. Essmaili and A. R. Medghalchi,
\emph{Biflatness of certain semigroup algebras}. Bull. Iran. Math. Soc. \textbf{39}, No. 5 (2013), 959--969.


\bibitem{glw} F. Ghahramani, R. J. Loy, and G. A. Willis, {\it Amenability and weak amenability of
 second conjugate Banach algebras}.  Proc. Amer. Math. Soc. \textbf{124} (1996), 1489--1497.

 \bibitem{hel3}  A. Ya. Helemskii, {\it The Homology of Banach and Topological Algebras}. Kluwer Academic Publishers, Dordrecht, 1989.


\bibitem{ho}J. M. Howie, \emph{Fundamentals of semigroup theory}. London Math.
Society Monographs,  Clarendon Press, Oxford, 1995.

\bibitem{jo1}B. E. Johnson, \emph{Cohomology in Banach algebras}. Mem. Amer. Math. Soc. Providence, R.I. {\bf 127}, 1972.
\bibitem{mos}  M. S. Moslehian and A. Niknam, {\it Biflatness and biprojectivity of second dual of Banach algebras}.
South. Asia. Bull. Math. (SEAMS). \textbf{27} (2003), 129--133.

\bibitem{mn} W. D. Munn,  {\it A class of irreducible matrix
representations of an arbitrary inverse semigroup}. Proc. Glasgow
Math. Assoc. \textbf{5} (1961), 41--48.

\bibitem{pou}  H. Pourmahmood-Aghababa, {\it \emph{(}Super\emph{)} module amenability, module topological centre and semigroup algebras}. Semigroup Forum. \textbf{81} (2010), 344--356.

\bibitem{pou2}  H. Pourmahmood-Aghababa,  {\it A note on two equivalence relations on inverse
semigroups}. Semigroup Forum. \textbf{84} (2012), 200--202.

\bibitem{ram}  P. Ramsden, {\it Biflatness of semigroup algebras}. Semigroup Forum.
{\bf 79} (2009), 515--530.

\bibitem{ra1}  R. Rezavand, M. Amini, M. H. Sattari, and D. Ebrahimi Bagha,
{\it Module Arens regularity for semigroup algebras}. Semigroup
Forum. \textbf{77} (2008), 300--305.
\bibitem{ra} M. A. Rieffel,
\emph{Induced Banach representations of Banach algebras and locally compact groups}. J. Func. Anal. {\bf 1} (1967), 443--491.
\end{thebibliography}
\end{document}